\documentclass[12pt]{article}
\usepackage{indentfirst, latexsym, bm, amssymb}

\topmargin  = -0.2 in \oddsidemargin = 0.25 in
\setlength{\textheight}{8.5in} \setlength{\textwidth}{6in}
\setlength{\unitlength}{1.0 mm}

\begin{document}

\newtheorem{theorem}{Theorem}[section]
\newtheorem{corollary}[theorem]{Corollary}
\newtheorem{definition}[theorem]{Definition}
\newtheorem{conjecture}[theorem]{Conjecture}
\newtheorem{question}[theorem]{Question}
\newtheorem{lemma}[theorem]{Lemma}
\newtheorem{proposition}[theorem]{Proposition}
\newtheorem{example}[theorem]{Example}
\newenvironment{proof}{\noindent {\bf
Proof.}}{\rule{3mm}{3mm}\par\medskip}
\newcommand{\remark}{\medskip\par\noindent {\bf Remark.~~}}
\newcommand{\pp}{{\it p.}}
\newcommand{\de}{\em}

\newcommand{\JEC}{{\it Europ. J. Combinatorics},  }
\newcommand{\JCTB}{{\it J. Combin. Theory Ser. B.}, }
\newcommand{\JCT}{{\it J. Combin. Theory}, }
\newcommand{\JGT}{{\it J. Graph Theory}, }
\newcommand{\ComHung}{{\it Combinatorica}, }
\newcommand{\DM}{{\it Discrete Math.}, }
\newcommand{\ARS}{{\it Ars Combin.}, }
\newcommand{\SIAMDM}{{\it SIAM J. Discrete Math.}, }
\newcommand{\SIAMADM}{{\it SIAM J. Algebraic Discrete Methods}, }
\newcommand{\SIAMC}{{\it SIAM J. Comput.}, }
\newcommand{\ConAMS}{{\it Contemp. Math. AMS}, }
\newcommand{\TransAMS}{{\it Trans. Amer. Math. Soc.}, }
\newcommand{\AnDM}{{\it Ann. Discrete Math.}, }
\newcommand{\NBS}{{\it J. Res. Nat. Bur. Standards} {\rm B}, }
\newcommand{\ConNum}{{\it Congr. Numer.}, }
\newcommand{\CJM}{{\it Canad. J. Math.}, }
\newcommand{\JLMS}{{\it J. London Math. Soc.}, }
\newcommand{\PLMS}{{\it Proc. London Math. Soc.}, }
\newcommand{\PAMS}{{\it Proc. Amer. Math. Soc.}, }
\newcommand{\JCMCC}{{\it J. Combin. Math. Combin. Comput.}, }
\newcommand{\GC}{{\it Graphs Combin.}, }

\title{The Tur\'{a}n Number of  Disjoint Copies of Paths
\thanks{
This work is supported by National Natural Science
Foundation of China (No.11271256 and 11531001), The Joint Israel-China Program
(No.11561141001),  Innovation Program of Shanghai Municipal Education Commission (No.14ZZ016) and Specialized Research Fund for the Doctoral Program of Higher Education (No.20130073110075).
\newline \indent $^{\dagger}$Correspondent author:
Xiao-Dong Zhang (Email: xiaodong@sjtu.edu.cn), }}
\author{ Long-Tu Yuan  and Xiao-Dong Zhang$^{\dagger}$   \\
{\small Department of Mathematics, and Ministry of Education }\\
{\small Key Laboratory of Scientific and Engineering Computing, }\\
{\small Shanghai Jiao Tong University} \\
{\small  800 Dongchuan Road, Shanghai, 200240, P.R. China}\\
{\small Email: xiaodong@sjtu.edu.cn, yuanlongtu@sjtu.edu.cn}}
\date{}
\maketitle
\begin{abstract}
The Tur\'{a}n number of a graph $H$, $ex(n,H)$, is the maximum number of edges in a simple graph of order $n$  which does not contain $H$ as a subgraph.  Let $k\cdot P_3$ denote  $k$ disjoint copies of a path on  $3$ vertices. In this paper, we determine the value $ex(n, k\cdot P_3)$ and characterize all extremal graphs. This  extends a result of Bushaw and Kettle  [N. Bushaw and N. Kettle, Tur\'{a}n  Numbers of multiple and equibipartite forests,  Combin. Probab. Comput., 20(2011) 837-853.],  which solved the conjecture proposed by Gorgol in [I. Gorgol. Tur\'{a}n numbers for disjoint copies of graphs. {\it Graphs Combin.},  27 (2011) 661-667.].
\end{abstract}

{{\bf Key words:} Tur\'{a}n number; extremal graph; disjoint path.}

{{\bf AMS Classifications:} 05C35, 05C38}.
\vskip 0.5cm

\section{Introduction}
  Our notation in this paper is  standard (see, e.g. \cite{diestel2010}).
Let $G=(V(G), E(G))$ be a simple graph, where $V(G)$ is the vertex set with  $n$ vertices and $E(G)$ is the edge set with size $e(G)$. The {\it degree} of $v\in V(G)$, the number of edges incident to $v$, is denoted by $d_{G}(v)$ and the set of neighbors of $v$ is denoted by $N(v)$. If $u$ and $v$ in $V(G)$ are adjacent, we say that $u$ {\it hits} $v$ or $v$ {\it hits} $u$. If $u$ and $v$ are not adjacent, we say that $u$ {\it misses} $v$ or $v$ {\it misses} $u$.
   If $S\subseteq V(G)$, the induced subgraph of $G$ by $S$ is denoted by $G[S]$.  Let $G$ and $H$ be two disjoint graphs. Denote by $G\bigcup H$  the disjoint union   of $G$ and $H$ and by $k\cdot G$ the disjoint union of  $k$ copies of a graph $G$. Denote by  $G+H$ the graph obtained from $G\bigcup H$ by adding edges between all vertices of $G$ and all vertices of $H$.  Moreover, Denote by $P_{l}$ a path on $l$ vertices and by $M_{t}$ the disjoint union of $\lfloor\frac{t}{2}\rfloor$ disjoint copies of edges and $\lceil \frac{t}{2}\rceil-\lfloor\frac{t}{2}\rfloor$ isolated vertex (maybe no isolated vertex). We often refer to a path by the nature sequence of its vertices, writing, say, $P_l=x_1x_2\ldots x_l$ and calling $P_l$ a path from $x_1$ to $x_l$.

{\it The Tur\'{a}n number} of a graph $H$, $ex(n,H)$, is the maximum number of edges in a graph of order $n$  which does not contain $H$ as a subgraph. Denote by $G_{ex}(n,H)$ a graph on $n$ vertices with $ex(n,H)$ edges  containing no $H$ as a subgraph and call this graph an {\it extremal graph} for $H$. In general, the extremal graph(s) is not unique.

In 1941, Tur\'{a}n proved that the extremal graph without containing $K_r$ as a subgraph is
%the complete $(r-1)$-partite graph on $n$ vertices which is balanced, in that the part sizes are as equal as possible(any two sizes differ by at most $1$). This balanced complete $(r-1)$-partite graph on $n$ vertices is
the Tur\'{a}n graph $T_{r-1}(n)$. Later, Moon \cite{moon1968} (only when $r-1$ divides $n-k+1$) and Simonovits \cite{simonovits1968} showed that  $K_{k-1}+T_{r-1}(n-k+1)$ is the unique extremal graph containing no $k\cdot K_r$  for sufficient large $n$.
 %For general graphs $H$, we still do not know how to compute Tur\'{a}n number exactly, but when $H$ is not bipartite, celebrate for Erd\H{o}s-Stone theorem\cite{erdHos1946stone}, P. Erd\H{o}s and M. Simonovits\cite{erdHos1966limit} got an approximate result which is $ex(n,H)=ex(n,K_t)+o(n^2)$. But when $H$ is bipartite we can only get that $ex(n,H)=o(n^2)$. This so-called $'degenerate'$ problem is a major open problem to determine even the order of  magnitude of Tur\'{a}n numbers. We suggest readers to the excellent survey by Z. F\"{u}redi and M. Simonovits\cite{Furedi2013}.

In 1959, Erd\H{o}s and Gallai \cite{erdHos1959maximal} proved the following  well known result.
\begin{theorem}\cite{erdHos1959maximal}\label{erdod1959}
If $G$ is a simple graph with $n\ge k$ vertices, then $ex(n,P_{k})\leq \frac{1}{2}(k-2)n$ with equality if and only if $n=(k-1)t$.  Moreover the extremal graph is $\bigcup_{i=1}^{t}K_{k-1}$.
%If $n\leq k-1$, then $ex(n,P_{k})={n \choose 2}$, the extremal graph is $K_{n}$.
\end{theorem}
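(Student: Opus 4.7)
The plan is to prove $ex(n, P_k) \le \frac{1}{2}(k-2)n$ by induction on $n$, tracking the equality characterization throughout. For the base case, when $n \le k-1$ we have trivially $e(G) \le {n \choose 2} \le \frac{1}{2}(k-2)n$, with equality if and only if $n = k-1$ and $G = K_{k-1}$. For the inductive step with $n \ge k$, I split on whether $G$ is connected.

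If $G$ is disconnected, choose any connected component $C$, set $G_1 = G[C]$ and $G_2 = G - V(C)$. Both are $P_k$-free, so the inductive hypothesis applied separately gives $e(G_i) \le \frac{1}{2}(k-2)|V(G_i)|$, and summing yields $e(G) \le \frac{1}{2}(k-2)n$. Equality forces equality in each piece, so each $G_i$ is a disjoint union of copies of $K_{k-1}$, producing the claimed extremal structure. The main work is therefore to show that any connected $P_k$-free graph on $n \ge k$ vertices satisfies the \emph{strict} inequality $e(G) < \frac{1}{2}(k-2)n$, since such a connected graph cannot itself be a disjoint union of copies of $K_{k-1}$.

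For the connected case, I would use the classical longest-path approach of Erd\H{o}s and Gallai. Let $P = v_1 v_2 \cdots v_l$ be a longest path in $G$, so $l \le k-1$. Maximality forces $N(v_1), N(v_l) \subseteq V(P)$, giving $d(v_1), d(v_l) \le l-1 \le k-2$. Since $G$ is connected and $n > l$, some $u \notin V(P)$ is adjacent to a vertex $v_j$ of $P$, and a rotation argument shows that $v_1 v_{j-1}$ and $v_l u$ cannot both be edges, else we would obtain a strictly longer path. Iterating these rotations and translating the resulting non-edges into a careful degree count yields the required strict inequality. The main obstacle will be this path-rotation bookkeeping, together with the parity subtleties when $k-2$ is odd; a cleaner fallback is to peel off a vertex of minimum degree: if $\delta(G) \le \lfloor (k-2)/2 \rfloor$, delete it and invoke induction directly, while if $\delta(G) > (k-2)/2$, a Dirac-type argument on the longest path forces a $P_k$, contradicting our assumption. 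Retracing the equality case in each branch then confirms that the only extremal graphs are the disjoint unions $\bigcup_{i=1}^{t} K_{k-1}$.
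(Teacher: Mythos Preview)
The paper does not actually prove this theorem: it is quoted in the Introduction as the classical 1959 result of Erd\H{o}s and Gallai and is simply cited via \cite{erdHos1959maximal}, with no proof given anywhere in the text. So there is no ``paper's own proof'' to compare against.

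As for your sketch on its own merits: the overall induction-on-$n$ framework with the connected/disconnected split is sound, and your fallback approach (b) --- peel off a vertex of minimum degree when $\delta(G)\le\lfloor(k-2)/2\rfloor$, and otherwise use the longest-path/Ore argument to force a $P_k$ --- is the clean way to finish. Two points to tighten. First, in the rotation paragraph your sentence ``$v_1v_{j-1}$ and $v_lu$ cannot both be edges'' is off: since $u\notin V(P)$, the single edge $v_lu$ already extends $P$, so that clause says nothing. What you actually need is the standard pairing argument: if $v_1\sim v_i$ and $v_l\sim v_{i-1}$ for some $i$ then $V(P)$ carries an $l$-cycle, and connectivity with $n>l$ yields a longer path; hence $d(v_1)+d(v_l)\le l-1\le k-2$, contradicting $\delta(G)\ge\lceil(k-1)/2\rceil$. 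Second, the equality analysis in the peeling branch is not automatic when $k$ is even: you must argue that if $G-v$ is a disjoint union of $K_{k-1}$'s and $G$ is connected, then any neighbour $a$ of $v$ lies in some $K_{k-1}$, and a Hamilton path of that clique ending at $a$ followed by $v$ is a $P_k$, a contradiction. With those two fixes your argument goes through.
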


Recently, Gorgol \cite{gorgol} studied the Tur\'{a}n number of disjoint copies of any connected graphs. Let $H$ be any connected graph on $l$ vertices.  With aid of the two graphs $G_{ex}(n-kl+1,H)\bigcup K_{kl-1}$ and $G_{ex}(n-k+1,H)+K_{k-1}$, she presented a lower bound for $ex(n,k\cdot H)$. In particular, she proved the following result.
\begin{theorem}\cite{gorgol}\label{gorgol23}
$$\begin{array}{llll}
ex(n, 2\cdot P_3)&=&\lfloor\frac{n-1}{2}\rfloor+n-1, &\mbox{for}\ \ n\ge 9;\\
ex(n, 3\cdot P_3)&=&\lfloor\frac{n}{2}\rfloor+2n-4,&\mbox{for}\ \ n\ge 14.\end{array}$$
\end{theorem}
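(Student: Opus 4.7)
The plan is to establish matching lower and upper bounds, and to characterize the extremal graphs along the way. The lower bound in both cases comes from the graph $K_{k-1}+M_{n-k+1}$, i.e., $K_1+M_{n-1}$ for $k=2$ and $K_2+M_{n-2}$ for $k=3$. Since $M_{n-k+1}$ contains no $P_3$, every $P_3$ in this graph must contain at least one of the $k-1$ apex vertices, so at most $k-1$ vertex-disjoint $P_3$'s can fit and the graph is $k\cdot P_3$-free. Edge counting gives $(n-1)+\lfloor(n-1)/2\rfloor$ for $k=2$ and $1+2(n-2)+\lfloor(n-2)/2\rfloor=(2n-4)+\lfloor n/2\rfloor$ for $k=3$, matching the two asserted values.

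For the upper bound when $k=2$, let $G$ be $2\cdot P_3$-free on $n\geq 9$ vertices. If $G$ contains no $P_3$ then $G$ is a matching with at most $\lfloor n/2\rfloor$ edges, far below the target. Otherwise, fix a $P_3$ $xyz$ in $G$; then $G-\{x,y,z\}$ is $P_3$-free and hence a matching on $n-3$ vertices, contributing at most $\lfloor(n-3)/2\rfloor$ edges. Since $\lfloor(n-1)/2\rfloor+(n-1)=\lfloor(n-3)/2\rfloor+n$, the task reduces to showing that the edges incident to $\{x,y,z\}$ (including those inside $\{x,y,z\}$) number at most $n$. I would choose $xyz$ so that its middle vertex $y$ has largest possible degree, then for each $r\in R:=V(G)\setminus\{x,y,z\}$ control $|N(r)\cap\{x,y,z\}|$. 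Vertices with $|N(r)\cap\{x,y,z\}|\geq 2$ act as ``hubs''; combining a hub with its match in the matching $G[R]$, or with another hub and the unused vertex of $\{x,y,z\}$, produces a second $P_3$ disjoint from some $P_3$ through $\{x,y,z\}$, contradicting $2\cdot P_3$-freeness. A case analysis on the number of hubs, the edges inside $G[\{x,y,z\}]$, and whether $y$ is universal in $G$, yields the bound and forces $G\cong K_1+M_{n-1}$ at equality.

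For $k=3$, the same strategy applies using the freshly-proved $k=2$ bound as the inductive base. Let $G$ be $3\cdot P_3$-free on $n\geq 14$ vertices; pick a $P_3$ $xyz$ (otherwise $G$ is a matching). Then $G-\{x,y,z\}$ is $2\cdot P_3$-free on $n-3$ vertices, so $e(G-\{x,y,z\})\leq(n-4)+\lfloor(n-4)/2\rfloor$, and the remaining task is to bound the edges incident to $\{x,y,z\}$ by $n+2$. The hub analysis from the $k=2$ case now interacts with the richer $2\cdot P_3$-free structure of $G[R]$ (which at the extremal level is itself close to $K_1+M_{n-4}$), and the $3\cdot P_3$-free hypothesis forbids joint configurations of hubs and $P_3$'s of $G[R]$ that would otherwise combine to three disjoint $P_3$'s in $G$. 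Summing the contributions yields $e(G)\leq(2n-4)+\lfloor n/2\rfloor$, with equality forcing $G\cong K_2+M_{n-2}$.

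The main obstacle in both cases is the crossing-edge bound. The trivial estimate of $3(n-3)$ edges between $\{x,y,z\}$ and $R$ is far too loose, so the entire argument rests on the hub analysis. The key principle is that two vertices outside $\{x,y,z\}$ each with $\geq 2$ neighbors inside almost always produce an extra $P_3$ disjoint from a $P_3$ through the third apex, which combined with a $P_3$ through the first hub gives $2\cdot P_3$ (respectively, combined with a $2\cdot P_3$ in $G[R]$, gives $3\cdot P_3$). Enumerating and ruling out the few exceptional configurations---hubs sharing neighbors, hubs attached to the matching structure of $R$ in a restricted way---and showing they either fit inside the claimed extremal graph or violate the edge count, is the delicate technical step; the thresholds $n\geq 9$ and $n\geq 14$ ensure enough room for the forbidden copies of $k\cdot P_3$ to materialize.
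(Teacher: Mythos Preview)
The statement you are proving is quoted from Gorgol and not reproved directly in this paper; it is subsumed by Theorem~\ref{main}, whose proof goes by double induction on $k$ and $n$, choosing $H=(k-1)\cdot P_3$ so that $G'=G-V(H)$ has the \emph{maximum} number of edges and then deleting five carefully selected vertices (one from $H$, four from $G'$) to invoke the hypothesis for $(k-1,n-5)$. Your plan---delete a single $P_3$ and bound the crossing edges---is a different removal scheme, and your lower-bound construction is correct.

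The upper-bound argument has a genuine gap at its central step. You reduce the $k=2$ case to the claim that the number of edges incident to $\{x,y,z\}$ is at most $n$, but this is false even in the extremal graph $K_1+M_{n-1}$ for some admissible choices of the $P_3$: when $n$ is odd and one takes $x=u_1$, $y=a$, $z=u_2$ with $u_1,u_2$ lying in distinct matching edges, the incident-edge count is $n+1$, while correspondingly $e(G-\{x,y,z\})=\lfloor(n-3)/2\rfloor-1$. The total is still right, but your decoupled bound on the crossing part alone is not. Your selection rule ``middle vertex of largest degree'' forces $y=a$ but says nothing about $x,z$, so it does not exclude this configuration. The paper sidesteps this by maximising $e(G')$ over all copies of $H$, a choice that drives Lemmas~\ref{lemma1}--\ref{lemma3}; those lemmas are precisely the substitutes for your unexecuted ``hub analysis''. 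For $k=3$ the same issue recurs one level up: to get the claimed crossing bound of $n+2$ you implicitly need the $k=2$ equality structure on $G-\{x,y,z\}$, which again depends on which $P_3$ you delete. As written, the proposal is a plausible outline, but the crossing bound it pivots on is not true without a sharper choice of the deleted path and a proof replacing the promised case analysis.
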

Further, based on Theorem~\ref{gorgol23} and the lower bound of $k$ disjoint copies of connected graph, she proposed the following conjecture.
\begin{conjecture}\cite{gorgol}\label{con1}
$$ex(n, k\cdot P_3)=\lfloor\frac{n-k+1}{2}\rfloor+(k-1)n-\frac{k(k-1)}{2}$$
for $n$ sufficiently large.
\end{conjecture}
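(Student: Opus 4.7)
The plan is to induct on $k$, taking $k \in \{1, 2, 3\}$ as the base (Erd\H{o}s--Gallai Theorem~\ref{erdod1959} for $k = 1$ and Theorem~\ref{gorgol23} for $k = 2, 3$). The matching lower bound is realised by $G^* = K_{k-1} + M_{n-k+1}$: the middle vertex of any $P_3$ in $G^*$ has neighbours only among the $k - 1$ vertices of $K_{k-1}$ and possibly one matching partner, so every $P_3$ in $G^*$ must use at least one vertex of the $(k-1)$-clique, and $k$ vertex-disjoint copies would need $k$ distinct such vertices, a contradiction. For the upper bound, fix $k \ge 4$, let $n$ be sufficiently large, let $G$ be a $k \cdot P_3$-free graph on $n$ vertices with the maximum number of edges, and let $v$ be a vertex of maximum degree $\Delta$. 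I split into two cases according to whether $G - v$ contains $(k-1) \cdot P_3$.

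If $G - v$ is $(k-1) \cdot P_3$-free (Case A), the inductive hypothesis applied to $G - v$ gives
\[
e(G) \le \Delta + e(G - v) \le (n - 1) + (k - 2)(n - 1) - \frac{(k-1)(k-2)}{2} + \left\lfloor \frac{n - k + 1}{2} \right\rfloor = (k - 1) n - \frac{k(k-1)}{2} + \left\lfloor \frac{n - k + 1}{2} \right\rfloor,
\]
which matches the conjectured value exactly. Equality forces $\Delta = n - 1$ and, by inductive characterisation, $G - v = K_{k-2} + M_{n-k+1}$, whence $G = K_{k-1} + M_{n-k+1}$.

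Otherwise (Case B) $G - v$ contains $(k-1) \cdot P_3$ on a vertex set $W$ of size $3(k-1)$. Since $G$ has no $k \cdot P_3$, the induced subgraph $G - W$ contains no $P_3$ and is therefore a matching together with isolated vertices. As $v \in V(G) \setminus W$, this forces $|N(v) \cap (V \setminus W)| \le 1$ and hence $d(v) \le |W| + 1 = 3k - 2$. Since $v$ has maximum degree, every vertex of $G$ has degree at most $3k - 2$, and counting edges inside $W$, between $W$ and $V \setminus W$, and inside $G - W$ separately gives
\[
e(G) \le \frac{3(k-1)(3k-4)}{2} + 3(k-1)(3k - 2) + \left\lfloor \frac{n - 3(k-1)}{2} \right\rfloor,
\]
which is strictly less than $(k-1)n - \frac{k(k-1)}{2} + \lfloor (n-k+1)/2 \rfloor$ once $n$ is at least linear in $k$, contradicting the maximality of $e(G)$. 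So Case B never produces an extremal graph and the conjecture follows.

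The main technical point lies in Case B: the existence of a $(k-1) \cdot P_3$ in $G - v$ simultaneously collapses $G - W$ to a graph with no $P_3$ and caps $\Delta(G)$ at $3k - 2$, after which the crude count is decisively smaller than the target provided $n = \Omega(k)$. Quantifying ``sufficiently large $n$'' explicitly, and threading the inductive uniqueness of $K_{k-1} + M_{n-k+1}$ through Case A to obtain the characterisation of extremal graphs, are both routine.
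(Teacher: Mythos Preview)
Your argument is correct and gives a clean proof of the asymptotic statement. (One cosmetic slip: the sentence ``the middle vertex of any $P_3$ in $G^*$ has neighbours only among the $k-1$ vertices of $K_{k-1}$ and possibly one matching partner'' is literally true only when the middle vertex lies in $M_{n-k+1}$; when it lies in $K_{k-1}$ the conclusion is immediate anyway. The Case~B arithmetic forces roughly $n\ge 14k$, weaker than Bushaw--Kettle's $n\ge 7k$, but fine for ``sufficiently large''.)

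The paper does not re-prove Conjecture~\ref{con1} in isolation; it cites Bushaw--Kettle (Theorem~\ref{bushawn7k}) and then establishes the sharper Theorem~\ref{main} valid for \emph{all} $n$. That proof is structurally quite different from yours. Instead of pivoting on a maximum-degree vertex, the paper fixes a copy $H$ of $(k-1)\cdot P_3$ chosen so that $G'=G-V(H)$ has the most edges, and develops three lemmas (Lemmas~\ref{lemma1}--\ref{lemma3}) controlling the edges between the matching edges / isolated vertices of $G'$ and each individual $P_3$ in $H$. With these in hand, the proof deletes a carefully selected set of five vertices and applies a double induction on $k$ and $n$, handling the ranges $n=3k$, $3k<n\le 5k-1$, and $n>5k-1$ separately. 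What the paper's approach buys is exact control at every $n$, in particular the transition at $n=5k-1$ where two non-isomorphic extremal graphs coexist; your degree-pivot argument, while much shorter, cannot see this threshold because the Case~B count is too crude below $n\approx 14k$. Conversely, your route avoids the casework of Lemmas~\ref{lemma2}--\ref{lemma3} entirely and would be the natural choice if one only wanted the large-$n$ result and the extremal-graph characterisation.
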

Later, Bushaw and Kettle \cite{bushwa} proved Conjecture~\ref{con1} and characterized all extremal graphs. Their result can be stated as follows.
\begin{theorem}\cite{bushwa}
\label{bushawn7k}
$$ex(n, k\cdot P_3)={k-1  \choose 2}+(n-k+1)(k-1)+\lfloor\frac{ n-k+1}{2}\rfloor\ \mbox{for}\ \ n\ge 7k.$$
Moreover, the extremal graph is $K_{k-1}+M_{n-k+1}.$
\end{theorem}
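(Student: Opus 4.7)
For the lower bound, observe that $G^{\star}:=K_{k-1}+M_{n-k+1}$ has exactly $\binom{k-1}{2}+(k-1)(n-k+1)+\lfloor(n-k+1)/2\rfloor$ edges and contains no $k\cdot P_{3}$: since $M_{n-k+1}$ is itself $P_{3}$-free, every $P_{3}$ in $G^{\star}$ must meet the $K_{k-1}$ part, and only $k-1$ such vertices are available. For the upper bound, set $f(k,n):=\binom{k-1}{2}+(k-1)(n-k+1)+\lfloor(n-k+1)/2\rfloor$ and argue by induction on $k$; the base case $k=1$ is the statement $ex(n,P_{3})=\lfloor n/2\rfloor$ of Theorem~\ref{erdod1959}.

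For the inductive step, let $G$ be a $k\cdot P_{3}$-free graph on $n\ge 7k$ vertices with $e(G)\ge f(k,n)$. I would first reduce to high minimum degree via an auxiliary induction on $n$: if $\delta(G)\le k-1$, delete a minimum-degree vertex $v$ and apply the inductive statement to $G-v$; since a direct computation yields $f(k,n)-f(k,n-1)\ge k-1$, the count closes, and in the case of equality the known structure of $G-v$ pins down $G$. So one may assume $\delta(G)\ge k$. Next, fix a maximum family $\mathcal{P}=\{P^{1},\ldots,P^{t}\}$ of vertex-disjoint $P_{3}$'s in $G$; the inductive hypothesis at $k-1$ forces $t=k-1$ (otherwise $e(G)\le f(k-1,n)$, which is strictly less than $f(k,n)$ when $n\ge 7k$ by a direct check that the difference is at least $n-k$). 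Writing $U=V(\mathcal{P})$ and $W=V(G)\setminus U$, so $|U|=3(k-1)$ and $|W|\ge 4k+3$, the maximality of $\mathcal{P}$ makes $G[W]$ a $P_{3}$-free graph and hence a matching plus isolated vertices, while $\delta(G)\ge k$ forces each $w\in W$ to have at least $k-1$ neighbors in $U$. A rotation argument then shows that all these cross-edges can be made to land on a fixed $(k-1)$-subset $A\subseteq U$ (one vertex out of each $P^{i}$), so that $G\subseteq A+(G-A)$ with $G-A$ a matching plus isolated vertices; summing the edges gives exactly $f(k,n)$ and identifies $G$ with $K_{k-1}+M_{n-k+1}$.

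The principal obstacle is the rotation step. One must classify how each $w\in W$ can attach to a path $P^{i}=x_{i}y_{i}z_{i}\in\mathcal{P}$ --- to the center $y_{i}$, to a single endpoint, to two vertices of $P^{i}$, or across two different $P^{i}$ and $P^{j}$ in bad positions --- and in every non-extremal configuration either exhibit an immediate extra $P_{3}$ disjoint from $\mathcal{P}$, or rotate one or two of the $P^{i}$'s and combine the freed vertices with the matching structure of $G[W]$ to produce a $k$-th disjoint $P_{3}$, contradicting the maximality of $\mathcal{P}$. The hypothesis $n\ge 7k$ is used precisely here: it yields $|W|\ge 4k+3$, and hence enough spare matched edges or isolated vertices in $W$ for each rotation to be completed without reusing committed vertices. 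Once all bad configurations are ruled out, summing the contributions from inside $A$, from $A$ to its complement, and within the residual matching gives the claimed bound $f(k,n)$ and forces the extremal structure $K_{k-1}+M_{n-k+1}$.
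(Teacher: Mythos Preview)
The paper does not prove Theorem~\ref{bushawn7k} directly; it is quoted from Bushaw--Kettle, and the paper's own contribution is the stronger Theorem~\ref{main}, whose Section~3.3 ($n>5k-1$) subsumes the range $n\ge 7k$. Your outline follows the Bushaw--Kettle route (minimum-degree reduction followed by a rotation argument on a maximum $P_3$-packing), whereas the paper's argument in Section~3.3 is organised differently: it fixes a copy $H=(k-1)\cdot P_3$ for which $e(G-V(H))$ is maximum, applies the edge-counting Lemmas~\ref{lemma1}--\ref{lemma3} to bound the edges between $V(H)$ and four specified vertices of $G-V(H)$, and then deletes a well-chosen five-vertex set containing a $P_3$ so as to invoke the induction hypothesis at $(k-1,\,n-5)$. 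Both share the outer induction on $k$, but the inductive step is executed by entirely different mechanisms --- you never need the ``maximise $e(G-V(H))$'' trick, and the paper never passes through $\delta(G)\ge k$.

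Two points in your sketch are not yet a proof. First, the auxiliary induction on $n$ stalls at the boundary: when $n=7k$ and $\delta(G)\le k-1$, deleting a minimum-degree vertex lands at $n-1=7k-1$, outside the hypothesis of the theorem you are proving, so you cannot invoke the inductive statement there; you must either supply a separate argument at $n=7k$ or appeal to the statement for $5k-1<n<7k$, which is precisely what the paper's Theorem~\ref{main} supplies but which you have not. Second, and more seriously, the ``rotation step'' is asserted rather than carried out. The claim that all $W$--$U$ edges can be made to land on a fixed $(k-1)$-subset $A\subseteq U$ is the whole structural content of the result; saying that in every non-extremal attachment pattern one can rotate one or two of the $P^i$ and use spare matching edges in $W$ to manufacture a $k$-th $P_3$ is a programme, not an argument. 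The paper's Lemmas~\ref{lemma1}--\ref{lemma3} are exactly the kind of exhaustive local analysis one needs here (controlling, for each $P^i$, how many edges can go to a pair or a quadruple in $G-V(H)$ without creating $2\cdot P_3$ or freeing an extra edge), and nothing equivalent appears in your write-up.
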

In fact, Gorgol in \cite{gorgol} also conjecture that the lower bound is sharp for $k\cdot P_3$. Based on the proof of Conjecture \ref{con1}, Bushaw and Kettle  \cite{bushwa} further conjectured that the extremal graph  is unique for $n> 5k-1$. Their conjecture can be stated as follows.
\begin{conjecture}\cite{bushwa, gorgol}\label{con}
$$ex(n, k\cdot P_3)=\left\{\begin{array}{lll}
{3k-1 \choose 2}+\lfloor\frac{n-3k+1}{2}\rfloor, &\mbox{for}\ \ 3k\le n\le 5k-1;\\
{k-1  \choose 2}+(n-k+1)(k-1)+\lfloor\frac{ n-k+1}{2}\rfloor, &\mbox{for}\ \ n\ge 5k-1.
\end{array}\right.$$
\end{conjecture}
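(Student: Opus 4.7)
The proof strategy is induction on $k$, with base case $k=1$ given by Theorem~\ref{erdod1959}: a $P_3$-free graph on $n$ vertices is a matching together with isolated vertices, in agreement with both formulas of Conjecture~\ref{con} when $k=1$. For the inductive step, fix $k \ge 2$ and $n \ge 3k$; let $f_k(n)$ denote the conjectured value, and let $G$ be an $n$-vertex graph with $e(G) \ge f_k(n)+1$. Assume for contradiction that $G$ is $(k\cdot P_3)$-free. (The extremal characterization is obtained by running the same argument with $e(G) = f_k(n)$ and tracking equality cases.) Since $f_k(n) > f_{k-1}(n)$ throughout $n \ge 3k$, the inductive hypothesis forces $G$ to contain $k-1$ pairwise disjoint copies of $P_3$. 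Fix such a family $\mathcal F = \{P^{(1)},\dots,P^{(k-1)}\}$ chosen so as to maximize first $e(G[B])$ and then $e(A,B)$, where $B := V(\mathcal F)$ and $A := V(G) \setminus B$, so $|B|=3k-3$ and $|A|=n-3k+3\ge 3$.

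The first easy reduction is that $G[A]$ contains no $P_3$ (else $\mathcal F$ plus such a $P_3$ gives $k\cdot P_3$), so $G[A]$ is a matching plus isolated vertices and $e(G[A]) \le \lfloor (n-3k+3)/2\rfloor$. Writing
\[
e(G) = e(G[A]) + e(A,B) + e(G[B])
\]
and using the trivial bound $e(G[B]) \le \binom{3k-3}{2}$ reduces the task to obtaining a sharp upper bound on $e(A,B)$.

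The main technical step is a swap/exchange analysis. For each $b \in B$ lying on $P^{(i)}\in\mathcal F$, one shows that $b$ cannot have too many neighbours in $A$ without creating a new $P_3$ disjoint from $\mathcal F \setminus \{P^{(i)}\}$. As a representative case, if $b$ is the centre of $P^{(i)} = xby$ and $b$ hits both endpoints of a matching edge $a_1a_2$ of $G[A]$, then replacing $P^{(i)}$ by $a_1ba_2$ forces $x$ and $y$ to have no neighbours in $A \setminus \{a_1,a_2\}$, since otherwise $yxa$ (for any such neighbour $a$) together with $a_1ba_2$ and $\mathcal F \setminus \{P^{(i)}\}$ would yield $k$ pairwise disjoint copies of $P_3$. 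Similar but finer swaps when $b$ is an endpoint of $P^{(i)}$, iterated over all $i$, pin down the structure of the bipartite graph between $A$ and $B$: ultimately either three vertices of $A$ join $B$ to form a clique $K_{3k-1}$ (the small-$n$ situation), or $k-1$ vertices of $B$ are adjacent to every other vertex of $G$ (the large-$n$ situation).

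Summing the three bounds gives $e(G) \le f_k(n)$, contradicting $e(G)\ge f_k(n)+1$; tracking equality in every inequality identifies the unique extremal graph in each range, the two constructions coinciding only at $n = 5k-1$. The main obstacle is precisely the swap step: to push the Bushaw--Kettle threshold $n \ge 7k$ of Theorem~\ref{bushawn7k} down to $n \ge 3k$ one must execute the case analysis with essentially sharp constants, delicately handling the interplay between matching edges inside $A$ and the internal edges of $B$, and unifying the arguments across the two regimes separated by $n = 5k-1$.
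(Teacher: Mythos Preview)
Your high-level framework---induction on $k$, locate $(k-1)\cdot P_3$, analyze the leftover via exchange arguments---matches the paper's, but two substantive differences matter.

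First, the paper chooses the family $H=(k-1)\cdot P_3$ so as to \emph{maximize $e(G[A])$} (the edges outside $H$), not $e(G[B])$. This is precisely what powers Lemmas~\ref{lemma1}--\ref{lemma3}: any swap that frees up an extra edge in $A$ contradicts maximality. Your opposite optimization criterion would require re-deriving all the exchange lemmas from scratch, and it is not clear the analogues hold with comparable sharpness.

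Second, and more seriously, the paper does \emph{not} bound $e(A,B)$ globally and sum the three pieces. It runs a \emph{double} induction on $(k,n)$: in each regime it selects five vertices (typically a $P_3$ of the form $\alpha u_1v_1$ together with two further vertices of $A$) whose deletion costs at most a number of edges controlled by the swap lemmas, and then applies the inductive hypothesis to the remaining graph on $n-5$ vertices with $k-1$ in place of $k$ (in a few subcases it instead deletes one or two vertices and keeps $k$). Crucially, the \emph{structural characterization} of the extremal graphs for the smaller instance is invoked to finish; the edge bound and the uniqueness are proved simultaneously, not sequentially. Your proposal replaces this machinery with the bare assertion that swaps ``ultimately'' force either a $K_{3k-1}$ or $k-1$ universal vertices in $B$. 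That dichotomy is exactly the conclusion the paper spends Sections~3.1--3.3 establishing, and your sketch gives no mechanism for reaching it without the $(k,n)$-induction and the five-vertex deletion step. As written, the swap step is the gap.
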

In \cite{bushwa}, Bushaw and Kettle also determined the Tur\'{a}n number of $k$ disjoint copies of $P_{l}$ with  $l\ge 4$ and also  characterized all extremal graphs for sufficient large $n$. The related results on the Tur\'{a}n number of paths, forests may be referred to \cite{Balister2008, bielak2014, Faudree1975, lidicksy2013} and the references therein. There are also many hypergraph Tur\'{a}n problems
\cite{Furedi2015, Furedi2014, kostochk2015} of paths and cycles and  some results of the disjoint union of hypergraphs \cite{bushwa2014, gu}.  For Tur\'{a}n numbers of graphs and hypergraphs, there are several excellent surveys \cite{Furedi2013, keevash2011,Mubayi2015} for more information.

   In this paper,  we determine the value $ex(n, k\cdot P_3)$ and characterize all extremal graphs for all $k$ and $n$, which confirms Conjecture~\ref{con}. The main result in this paper can be stated as follows.
\begin{theorem}\label{main}
$$ex(n, k\cdot P_3)=\left\{\begin{array}{lll}
{n \choose 2},& \mbox{for} &  n<3k;\\
 { 3k-1 \choose 2}+\lfloor\frac{n-3k+1}{2}\rfloor, & \mbox{for} & 3k\le n<5k-1;\\
 %{k-1 \choose 2} +(n-k+1)(k-1)+\lfloor\frac{n-k+1}{2}\rfloor ={3k-1 \choose 2}+\lfloor\frac{n-3k+1}{2}\rfloor=
 {3k-1 \choose 2}+k,  & \mbox{for} & n=5k-1;\\
{k-1 \choose 2} +(n-k+1)(k-1)+\lfloor\frac{n-k+1}{2}\rfloor, & \mbox{for} & n>5k-1.
\end{array}\right.$$
  Further, (1). If $n<3k$, then  all extremal graphs are $K_n$, i.e., $G_{ex}(n, k\cdot P_3)=K_n$.

   (2). If $3k\leq n<5k-1$,  then all extremal graphs are $ K_{3k-1}\bigcup M_{n-3k+1}$, i.e.,  $G_{ex}(n, k\cdot P_3)=K_{3k-1}\bigcup M_{n-3k+1}$.

    (3). If $n=5k-1$, then  all extremal graphs are $ K_{3k-1}\bigcup M_{2k}$  and $K_{k-1}+M_{4k},$  i.e., $G_{ex}(n, k\cdot P_3)= K_{3k-1}\bigcup M_{2k}$ or $K_{k-1}+M_{4k}$ ($=M_{4}$ for $k=1$).

  (4). If $n>5k-1$,  then all extremal graphs are  $K_{k-1}+M_{n-k+1}$, i.e., $G_{ex}(n, k\cdot P_3)=K_{k-1}+M_{n-k+1}$.
\end{theorem}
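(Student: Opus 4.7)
The plan is to proceed by induction on $k$, with the base case $k=1$ reducing to the classical fact that a $P_3$-free graph is a matching, so $ex(n,P_3)=\lfloor n/2\rfloor$, which agrees with the claimed formula in each of its ranges. For the induction step, fix $k\ge 2$, assume the theorem for $k-1$, and let $G$ be an extremal $k\cdot P_3$-free graph on $n$ vertices. If $G$ does not even contain $(k-1)\cdot P_3$, then by the inductive hypothesis $e(G)\le ex(n,(k-1)\cdot P_3)$, which is strictly smaller than the target $ex(n,k\cdot P_3)$ in every relevant range, so such $G$ cannot be extremal. Hence we may select $k-1$ vertex-disjoint copies of $P_3$ in $G$, forming a set $S$ with $|S|=3(k-1)$, and the induced subgraph on $R:=V(G)\setminus S$ contains no $P_3$, so $G[R]$ is a matching of size at most $\lfloor(n-3k+3)/2\rfloor$.

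The core of the argument is to choose $S$ optimally and then to control both the edges inside $G[S]$ and the bipartite edges between $R$ and $S$. I would choose $S$ so as to maximize the total number of edges incident to $S$; the optimality should imply that if a pair $\{u,v\}$ matched in $G[R]$ has too many suitable neighbors in $S$, then one of the paths in $S$ can be rerouted to pass through $u$ and $v$, freeing a new $P_3$ inside $S$ and contradicting maximality. Such a rerouting argument should yield an upper bound of roughly $(k-1)|R|$ on the bipartite edges, which combined with the trivial $e(G[S])\le{3k-3 \choose 2}$ and the matching bound on $G[R]$ sums to at most the claimed value of $ex(n,k\cdot P_3)$.

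To pin down the extremal graphs, I would next analyze the equality cases. For large $n$, the bipartite edge bound forces $k-1$ ``hub'' vertices adjacent to everything else, producing the structure $K_{k-1}+M_{n-k+1}$; for small $n$, the slack in the argument allows a single clique on $3k-1$ vertices to absorb all surplus edges, producing $K_{3k-1}\cup M_{n-3k+1}$. The crossover between the two regimes, and the coexistence of the two non-isomorphic extremal graphs at $n=5k-1$, would come out of a direct comparison ${3k-1 \choose 2}+k={k-1 \choose 2}+(k-1)(4k)+2k$ at that boundary with strict inequality on either side.

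The main obstacle, and what distinguishes this paper from Bushaw--Kettle's result for $n\ge 7k$, is pushing the argument down to $n=5k-1$ and then into the regime $3k\le n\le 5k-2$. In the small-$n$ regime there are very few ``extra'' vertices outside $S$, so the rerouting arguments become tight and must be carried out through a careful finite case analysis, ruling out every intermediate hybrid (partial hubs, cliques of sizes other than $3k-1$, incomplete matchings) by explicit perturbation. I expect the most delicate part to be the uniqueness claim at $n=5k-1$, since there two non-isomorphic extremal graphs coexist and one must show that no third structure attains the same edge count.
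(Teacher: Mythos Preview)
Your outline is essentially the Bushaw--Kettle strategy: fix $(k-1)$ disjoint $P_3$'s on a set $S$, observe $G[R]$ is a matching, and try to cap the bipartite edges $e(S,R)$ by roughly $(k-1)|R|$ via rerouting. That argument is exactly what gives $n\ge 7k$, and you correctly identify that the hard part is the range $3k\le n\le 7k-1$. But the proposal does not supply the mechanism that closes this gap; ``careful finite case analysis, ruling out every intermediate hybrid by explicit perturbation'' is a description of what must happen, not a method. In particular, the crude estimate $e(G)\le \binom{3k-3}{2}+(k-1)|R|+\lfloor|R|/2\rfloor$ is already too weak at $n=5k$ for $k\ge 3$, so some genuinely new structural input is needed, and the proposal does not say what it is.

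The paper's argument differs from yours in two concrete ways. First, it chooses the copy $H=(k-1)\cdot P_3$ so as to \emph{maximize} $e(G-V(H))$ (the opposite of your ``maximize edges incident to $S$''); this choice is what drives Lemmas~2.1--2.3, which give sharp edge counts between a single path $x_jy_jz_j$ of $H$ and small configurations (two isolated vertices, one matching edge plus two isolated vertices, two matching edges) in $G'$. Second, rather than summing a global bipartite bound, the paper finds five specific vertices---typically a matching edge $u_1v_1$ in $G'$, a neighbour $\alpha\in\{x_1,y_1,z_1\}$ of $u_1$, and either another matching edge $u_2v_2$ or two isolated vertices $w_1,w_2$---deletes them, and applies the induction hypothesis to the pair $(k-1,n-5)$. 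The lemmas guarantee that the number of edges lost is small enough that the remaining graph still exceeds $ex(n-5,(k-1)\cdot P_3)$, forcing $(k-1)\cdot P_3\subseteq G-\{u_1,v_1,\alpha,u_2,v_2\}$ and hence $k\cdot P_3\subseteq G$ (using $\alpha u_1 v_1$ as the extra path). The induction is in fact a \emph{double} induction on $k$ and $n$: several subcases (e.g.\ when some matching edge of $G'$ sends no edge to $H$) are handled by deleting one or two vertices and invoking the result for the same $k$ at $n-1$ or $n-2$. None of this deletion-and-induct machinery is present in your plan, and it is precisely what replaces the slack in the Bushaw--Kettle count.
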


   It should be pointed that the extremal graphs are not unique for $n=5k-1$ and $k\ge 2$, while the extremal graph is unique for otherwise. The rest of this paper is organized as follows. In Section 2, several technique Lemmas are provided. In Section 3, The Proof of Theorem~\ref{main} is presented.
  \section{Several Lemmas}
For the remainder of this section, let $G$ be a simple graph of order $n$ contains $(k-1)\cdot P_3$ and contains no $k\cdot P_3$. In addition, let $H=(k-1)\cdot P_3$ be a $(k-1)$  disjoint paths $x_1y_1z_1,\ldots,x_{k-1}y_{k-1}z_{k-1}$ of $G$ such that $G^{\prime}=G-V(H)$ has the maximum edges, where $G^{\prime}=G-V(H)$ is the subgraph of $G$ by deleting all vertices in $H=(k-1)\cdot P_3$ and incident edges. In order to prove the main result in this paper, we present several technique Lemmas.
\begin{lemma}\label{lemma1}
If $G^{\prime}$  has no edge, i.e., $G^{\prime}$ consists of $t$ isolated vertices  with $t=n-3(k-1)\ge 3$, then
there are at most $p$ edges between $\{w_1, \ldots,  w_p\}\subseteq V(G^{\prime})$ and $\{x_j, y_j, z_j\}$ for $1\le j\le k-1$ and $3\le p\le t$. Moreover, if any two vertices in $G^{\prime}$ both hit at least one vertex in $\{x_j, y_j, z_j\}$, then they both hit $y_j$ and miss $x_j$ and $z_j$, for $1\le j\le k-1$.
\end{lemma}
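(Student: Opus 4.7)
The plan is to first establish the ``moreover'' clause via a swapping argument on $H$, and then deduce the edge bound almost immediately from it. The key lever throughout is the extremality of the chosen family $H$: for any alternative family $H'$ of $k-1$ vertex-disjoint copies of $P_3$ in $G$, the inequality $e(G-V(H')) \le e(G') = 0$ must hold, so exhibiting even a single edge in $G-V(H')$ already produces a contradiction.

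For the moreover clause, I would fix $j$, assume for contradiction that $w_a, w_b \in V(G')$ each hit at least one vertex of $\{x_j,y_j,z_j\}$ while $w_a$ hits $x_j$, and split into three subcases according to which vertex $w_b$ hits. If $w_b$ hits $x_j$, replace $x_jy_jz_j$ in $H$ by the $P_3$ $w_ax_jw_b$, leaving the edge $y_jz_j$ outside the new family. If $w_b$ hits $y_j$, replace $x_jy_jz_j$ by $w_by_jz_j$, leaving $w_ax_j$ outside. If $w_b$ hits $z_j$, replace $x_jy_jz_j$ by $w_ax_jy_j$, leaving $w_bz_j$ outside. Each subcase contradicts $e(G')=0$, so $w_a$ cannot hit $x_j$; the symmetric argument swapping $x_j\leftrightarrow z_j$ and exchanging the roles of $w_a,w_b$ shows that neither of $w_a, w_b$ hits $x_j$ or $z_j$, forcing both to hit $y_j$.

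For the edge count, let $m$ denote the number of vertices of $\{w_1,\ldots,w_p\}$ having at least one neighbor in $\{x_j,y_j,z_j\}$. If $m \ge 2$, the moreover clause forces every such vertex to contribute exactly one edge (to $y_j$), giving at most $m \le p$ edges; if $m \le 1$, all relevant edges come from at most one vertex and total at most three, which is $\le p$ because $p \ge 3$. The only real obstacle is carrying out the case analysis in the second paragraph cleanly, and in particular checking that each proposed swap really yields $k-1$ vertex-disjoint copies of $P_3$; but this is automatic, since $w_a, w_b \in V(G')$ lie outside $V(H)$ and every swap only touches vertices of $\{w_a, w_b, x_j, y_j, z_j\}$, so the other $k-2$ paths of $H$ are untouched.
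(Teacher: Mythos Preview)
Your proof is correct and rests on exactly the same mechanism as the paper's: produce an alternative family $H'$ of $(k-1)$ disjoint $P_3$'s so that $G-V(H')$ contains an edge, contradicting the extremality $e(G')=0$. The only difference is organizational. The paper first proves the edge bound directly---assuming at least $p+1$ edges forces some $w_1$ with two neighbors in $\{x_j,y_j,z_j\}$ and some $w_2$ with one, after which it runs the same case split you do---and then deduces the moreover clause separately from the fact that $G[w_{i_1},w_{i_2},x_j,y_j,z_j]$ cannot contain a $P_3$ plus a disjoint edge. You reverse the order, establishing the moreover clause first and reading off the edge bound as an immediate corollary via your $m\ge 2$ / $m\le 1$ dichotomy. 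Your ordering is slightly more economical, since the case analysis is performed only once.
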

\begin{proof} Suppose that there are at least $p+1$ edges between $\{w_1, \ldots, w_p\}$ and $\{x_j, y_j, z_j\}$ for some $1\le j\le k-1$ with  $d_F(w_1)\ge \ldots\ge d_F(w_p)$ and $3\le p\le t$, where $F$ is the subgraph of $G$ induced by vertex set $\{x_j, y_j, z_j, w_1, \ldots, w_p\}$.  Then $d_F(w_1)\ge 2$ and $d_F(w_2)\ge 1$. Hence  $w_1$ hits at least two vertices, say $x_j$ and $y_j$ ($y_j$ and $z_j$), or $x_j$ and $z_j$,  in $\{x_j, y_j, z_j\}$.  We consider the following two cases.

{\bf Case 1:} $w_1$ hits  $x_j$ and $y_j$ ($y_j$ and $z_j$). Note $d_G(w_2)\ge 1$. If $w_2$ hits $x_j$, then there exists an $H_1=(k-1)\cdot P_3$ with $(k-1)$ disjoint paths $P_3$: $x_1y_1z_1,$ $ \ldots, x_{j-1}y_{j-1}z_{j-1}, w_1x_jw_2, \ldots,$ $  x_{k-1}y_{k-1} z_{k-1}$ such that
$G^{\prime\prime}=G-V(H_1)$ has at least one edge $y_jz_j$, which contradicts to $G^{\prime}$ having no edges ($G^{\prime}$ has the maximum number of edges). If $w_2$ hits $y_j$ or $z_j$, say $w_2$ hits $y_j$, then there exists an $H_2=(k-1)\cdot P_3$ with $(k-1)$ disjoint paths $P_3$: $x_1y_1z_1, \ldots, x_{j-1}y_{j-1}z_{j-1}, w_2y_jz_j, \ldots, x_{k-1}y_{k-1}z_{k-1}$ such that
$G^{\prime\prime}=G-V(H_2)$ has at least one edge $x_jw_1$, which contradicts to $G^{\prime}$ having no edges. So the assertion holds.

{\bf Case 2:}  $w_1$ hits  $x_j$ and $z_j$.  If $w_2$ hits $x_j$, then there exists an $H_1=(k-1)\cdot P_3$ with $(k-1)$ disjoint paths $P_3$: $x_1y_1z_1, \ldots, x_{j-1}y_{j-1}z_{j-1}, w_1x_jw_2, \ldots, x_{k-1}y_{k-1}z_{k-1}$ such that
$G^{\prime\prime}=G-V(H_1)$ has at least one edge $y_jz_j$, which contradicts to $G^{\prime}$ having no edges. If $w_2$ hits $y_j$ or $z_j$, say $w_2$ hits $y_j$, then there exists an $H_2=(k-1)\cdot P_3$ with $(k-1)$ disjoint paths $P_3$: $x_1y_1z_1, \ldots, x_{j-1}y_{j-1}z_{j-1}, w_2y_jz_j, \ldots, x_{k-1}y_{k-1}z_{k-1}$ such that
$G^{\prime\prime}=G-V(H_2)$ has at least one edge $x_jw_1$, which contradicts to $G^{\prime}$ having no edge.
So the assertion holds.

Moreover, if $w_{i_1}$,$w_{i_2}$ both hit at least one vertex in $\{x_j, y_j, z_j\}$ for $1\le i_1<i_2\le t$, since the subgraph $G[w_{i_1}, w_{i_2}, x_j, y_j, z_j]$ can't contain disjoint union of $P_3$ and an edge, we have that $w_{i_1},w_{i_2}$ both hit $y_j$ and miss $x_j$ and $z_j$. We finish our proof.
\end{proof}

   \begin{lemma}\label{lemma2}
  Suppose that $G^{\prime}$ consists of one edge $u_1v_1$ and $t$ isolated vertices $\{w_1, \ldots,$ $ w_t\}$ with $t=n-3(k-1)-2\ge 2$.

(1). If there are  at most 4 edges between $\{u_1, v_1\}$ and $\{x_j, y_j, z_j\}$ with $1\le j\le k-1$,  then
there are at most 4 edges between $\{u_1, v_1, w_{i_1}, w_{i_2}\}$ and $\{x_j, y_j, z_j\}$ for $ i_1, i_2=1, \cdots, t$ and $i_1\neq i_2$.

(2). If there are  at least 5 edges between $\{u_1, v_1\}$ and $\{x_j, y_j, z_j\}$ with $1\le j\le k-1$,  then
there are at most 6 edges between $\{u_1, v_1\}$ and $\{x_j, y_j, z_j\}$, moreover, there is no edge between $\{w_1, \ldots, w_t\}$ and $\{x_j, y_j, z_j\}$.
\end{lemma}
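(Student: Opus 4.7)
The plan is to prove both parts by contradiction against the maximality of $e(G')$. In either case, a violation of the stated bound will allow us to rearrange $H=(k-1)\cdot P_3$ into a new $(k-1)\cdot P_3$ family $H'$ (replacing the single path $x_jy_jz_j$ by another $P_3$, which we call $P^*$) such that $e(G-V(H'))\ge 2>1=e(G')$.

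For part (2), the bound $\le 6$ is immediate since the bipartite graph between $\{u_1,v_1\}$ and $\{x_j,y_j,z_j\}$ has at most $2\cdot 3=6$ edges. For the ``no edge'' assertion, suppose some $w_i$ hits a vertex of $\{x_j,y_j,z_j\}$. Since $e_1\ge 5$, at most one of the six possible edges from $\{u_1,v_1\}$ to $\{x_j,y_j,z_j\}$ is missing, so every vertex of $\{x_j,y_j,z_j\}$ has at least one neighbor in $\{u_1,v_1\}$. Choose $P^*$ to use $w_i$ together with two vertices of $\{x_j,y_j,z_j\}$: take $P^*=w_iy_jz_j$ if $w_i\sim y_j$, $P^*=w_ix_jy_j$ if $w_i\sim x_j$, and $P^*=w_iz_jy_j$ if $w_i\sim z_j$. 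In each case the unique vertex $v^\star\in\{x_j,y_j,z_j\}\setminus V(P^*)$ remains in $G-V(H')$ together with both $u_1$ and $v_1$; the observation above guarantees $v^\star$ has a neighbor in $\{u_1,v_1\}$, so $G-V(H')$ contains both $u_1v_1$ and an edge incident to $v^\star$, yielding two edges and the desired contradiction.

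For part (1), I would assume toward contradiction that $e_1\le 4$ yet $f:=e_1+e_2\ge 5$, where $e_2$ denotes the number of edges between $\{w_{i_1},w_{i_2}\}$ and $\{x_j,y_j,z_j\}$. Then $e_2\ge 1$, so some $w\in\{w_{i_1},w_{i_2}\}$ hits a vertex of $\{x_j,y_j,z_j\}$. One now splits into cases on $e_1\in\{0,1,2,3,4\}$ and on the identities of the neighbors of $u_1,v_1,w_{i_1},w_{i_2}$ in $\{x_j,y_j,z_j\}$. The replacement candidates $P^*$ fall into two families: (i) paths of the form $v_1u_1\alpha$ with $\alpha\in\{x_j,y_j,z_j\}$ and $u_1\alpha$ an edge (or symmetrically with roles of $u_1$ and $v_1$ swapped), which free the two vertices of $\{x_j,y_j,z_j\}\setminus\{\alpha\}$ and transport the path-edge between them into the new $G'$; and (ii) paths of the form $w_{i_1}\alpha w_{i_2}$ when $\alpha$ is a common neighbor of $w_{i_1}$ and $w_{i_2}$ in $\{x_j,y_j,z_j\}$, or $w\alpha\beta$ through $w$ and two path-vertices, which preserve $u_1v_1$ inside $G-V(H')$. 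A careful case-by-case check shows that at least one such $P^*$ always produces an $H'$ with $e(G-V(H'))\ge 2$.

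The main obstacle is the exhaustive case analysis in part (1), since the shape of the $e_1$ edges (one of $u_1,v_1$ with full degree versus both with degree two, with various overlaps in $\{x_j,y_j,z_j\}$) interacts nontrivially with the positions of the $w$-edges and a naive choice of $P^*$ sometimes keeps only the single edge $u_1v_1$. The unifying principle is that the $7$-vertex subgraph $G[\{u_1,v_1,w_{i_1},w_{i_2},x_j,y_j,z_j\}]$ contains the fixed edges $u_1v_1,x_jy_j,y_jz_j$ plus $f\ge 5$ further edges, for a total of at least eight edges on seven vertices; this density is always enough to locate a $P_3$ whose four-vertex complement inside the subgraph induces $\ge 2$ edges, and this in turn yields the maximality-violating $H'$.
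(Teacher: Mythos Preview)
Your argument for part~(2) is correct and matches the paper's reasoning: with at least five edges from $\{u_1,v_1\}$ to $\{x_j,y_j,z_j\}$, any edge from a $w_i$ lets you reroute one $P_3$ so that the leftover four vertices carry two edges (equivalently, the six-vertex subgraph contains $2\cdot P_3$), contradicting either maximality of $e(G')$ or $k\cdot P_3$-freeness directly.

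Part~(1), however, is not yet a proof. You correctly identify the mechanism---replace $x_jy_jz_j$ by some $P^*$ so that $e(G-V(H'))\ge 2$---and you list the two natural families of replacement paths, but then you defer the work to ``a careful case-by-case check'' and a ``unifying density principle'' (eight edges on seven vertices forces a $P_3$ whose four-vertex complement carries two edges). Neither is carried out. The density statement is not a standard lemma, and it is sensitive to the particular structure here: the seven vertices split as $\{u_1,v_1\}\cup\{w_{i_1},w_{i_2}\}\cup\{x_j,y_j,z_j\}$ with no edges inside $\{w_{i_1},w_{i_2}\}$ or from $\{w_{i_1},w_{i_2}\}$ to $\{u_1,v_1\}$, and the obvious choice $P^*=x_jy_jz_j$ always leaves only the single edge $u_1v_1$. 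So the claim genuinely requires checking which $P^*$ works in each configuration, and you have not done this.

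The paper closes exactly this gap by an explicit case split on $e_1\in\{\,\ge 3,\ 2,\ 1,\ 0\,\}$, in each case using one of two forbidden substructures in $G[x_j,y_j,z_j,u_1,v_1,w_{i_1},w_{i_2}]$: either $2\cdot P_3$ (which would give $k\cdot P_3$ in $G$) or a $P_3$ together with two further edges (which would give a new $H'$ with $e(G-V(H'))\ge 2$, violating maximality). For $e_1\ge 3$ the first obstruction alone forces $e_2=0$; for $e_1\in\{1,2\}$ the subcases depend on which of $x_j,y_j,z_j$ are hit and both obstructions are used; for $e_1=0$ one argues inside $G[x_j,y_j,z_j,w_{i_1},w_{i_2}]$ that a $P_3$ plus a disjoint edge is forbidden, giving $e_2\le 3$. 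Your outline is compatible with all of this, but to complete the proof you need to actually perform this case analysis (or give a self-contained proof of your density claim for graphs with this specific bipartition structure).
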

\begin{proof}
 For  proof of (1),  we consider the following four cases.

{\bf Case 1:} There are at least 3 edges between $\{u_1, v_1\}$ and $\{x_j, y_j, z_j\}$.
 Since $G$ does not contain $k\cdot P_3$, the subgraph $G[x_j, y_j, z_j, u_1, v_1, w_1, w_2]$ of $G$ does not contain $2\cdot P_3$. Hence
 there is no edge between $\{w_{i_1}, w_{i_2}\}$ and $\{x_j, y_j, z_j\}$. So there are at most 4 edges between $\{u_1, v_1\}$ and $\{x_j, y_j, z_j\}$.

 {\bf Case 2:} There are  exactly 2 edges between $\{u_1, v_1\}$ and $\{x_j, y_j, z_j\}$. If $u_1$  hits $x_j$ and $z_j$, or $u_1$ hits  $x_j$ and $v_1$ hits $y_j$, or
 $u_1$ hits $x_j$ and $v_1$ hits $z_j$, then there is no edge between
$\{w_{i_1}, w_{i_2}\}$ and $\{x_j, y_j, z_j\}$, since the subgraph $G[x_j, y_j, z_j, u_1, v_1, w_1, w_2]$ of $G$ does not contain $2\cdot P_3$. If  $u_1$  hits $x_j$ and $y_j$, or $u_1$ and $v_1$ hit $x_j$,  then $w_{i_1}$ and $w_{i_2}$ miss $y_j$ and $z_j$, which implies that there are at most 2 edges between
$\{w_{i_1}, w_{i_2}\}$ and $\{x_j, y_j, z_j\}$. Hence there are at most 4 edges between $\{u_1, v_1\}$ and $\{x_j, y_j, z_j\}$. If $u_1$ and  $v_1$ hit $y_j$, then there are at most one edge between $\{w_{i_1}, w_{i_2}\}$ and $\{x_j, y_j, z_j\}$, since the subgraph $G[x_j, y_j, z_j, u_1, v_1, w_{i_1}, w_{i_2}]$ of $G$ does not contain neither $2\cdot P_3$ nor disjoint union of $P_3$ and two edges (since $G^{\prime}$ has the maximum edges). So there are at most 4 edges between $\{u_1, v_1,w_{i_1},w_{i_2}\}$ and $\{x_j, y_j, z_j\}$.

{\bf Case 3:} There is exactly one edge between $\{u_1, v_1\}$ and $\{x_j, y_j, z_j\}$.
If $u_1$ hits $x_j$ or $z_j$, say $x_j$, then $w_1$ and $w_2$  miss $y_j$ and $z_j$. Hence there are  at most  4 edges between
$\{u_1, v_1, w_{i_1}, w_{i_2}\}$ and $\{x_j, y_j, z_j\}.$ If   $u_1$ hits $y_j$, then  there are at most one edge between $\{w_{i_1}, w_{i_2}\}$ and $\{x_j,  z_j\}$, since the subgraph $G[x_j, y_j, z_j, u_1, v_1, w_{i_1}, w_{i_2}]$ of $G$ does not contain neither  $2\cdot P_3$ nor disjoint union of $P_3$ and two edges. Hence there there are  at most 4 edges between
$\{u_1, v_1, w_{i_1}, w_{i_2}\}$ and $\{x_j, y_j, z_j\}.$ So the assertion holds.

{\bf Case 4:} There is no edge between $\{u_1, v_1\}$ and $\{x_j, y_j, z_j\}$.
 There are at most three edges between $\{w_1,w_2\}$ and $\{x_j,y_j,z_j\}$, since the subgraph $G[x_j, y_j, z_j, w_1, w_2]$ of $G$ does not contain disjoint union of one edge and one $P_3$. So the assertion holds.

Proof of (2). Suppose that there are  at least 5 edges between $\{u_1, v_1\}$ and $\{x_j, y_j, z_j\}$. Then $w_i$ miss $x_j, y_j, z_j$, since  the subgraph $G[x_j, y_j, z_j, u_1, v_1, w_i]$ of $G$ does not contain  $2\cdot P_3$ for $i=1, \ldots, t$. So the assertion holds.
\end{proof}

 \begin{lemma}\label{lemma3}
   Suppose that $G^{\prime}$ consists of $s$ disjoint edges $u_1v_1, \ldots, u_sv_s$ and $t$ isolated vertices $\{w_1, \ldots,$ $ w_t\}$ with $s\ge 2$ and $t=n-3(k-1)-2s$. Moreover, suppose that the number of edges between $\{u_1, v_1\}$ and $\{x_1, y_1, z_1\}$  is the maximum value among the number of edges between $\{u_i, v_i\}$ and $\{x_j, y_j, z_j\}$ for $i=1, \ldots, s$ and $ j=1, \ldots, k-1$.

(1). If there are  at most 4 edges between $\{u_1, v_1\}$ and $\{x_1, y_1, z_1\}$, then
there are at most 4 edges between $\{u_1, v_1, u_i, v_i\}$ and $\{x_j, y_j, z_j\}$ for $i=2, \ldots, s $ and $j=1, \ldots, k-1$.

(2). If there are  at least 5 edges between $\{u_1, v_1\}$ and $\{x_1, y_1, z_1\}$,  then
there are at most 6 edges between $\{u_1, v_1, u_i, v_i\}$ and $\{x_j, y_j, z_j\}$ for $i=2, \ldots, s$ and $j=1, \ldots, k-1$, and there is no edge between $\{u_2, v_2, \ldots, u_s, v_s, w_1, \ldots, w_t\}$ and $\{x_1, y_1, z_1\}$. Moreover,  there are  6 edges between $\{u_1, v_1, u_i, v_i\}$ and $\{x_j, y_j, z_j\}$ if and only if   either there are 6 edges between $\{u_1, v_1\}$ and $\{x_j, y_j, z_j\}$
and there is no edge between  $\{u_i, v_i\}$  and $\{x_j, y_j, z_j\}$, or  there are 6 edges between $\{u_i, v_i\}$ and $\{x_j, y_j, z_j\}$
and there is no edge between  $\{u_1, v_1\}$  and $\{x_j, y_j, z_j\}$. $2\le i\le s, $ $1\le j\le k-1$.
     \end{lemma}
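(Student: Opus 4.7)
\medskip\noindent\textbf{Proof proposal.} My plan is to parallel the case analysis of Lemma~\ref{lemma2}, simply replacing the pair $w_{i_1}, w_{i_2}$ of isolated vertices by the endpoints $u_i, v_i$ of a second $G'$-edge. Two tools drive the whole argument: (i) the induced subgraph $G[\{x_j, y_j, z_j, u_1, v_1, u_i, v_i\}]$ cannot contain $2\cdot P_3$, since otherwise, combined with the remaining $k-2$ paths of $H$, we would obtain a $k\cdot P_3$ in $G$; and (ii) no rearrangement of the path $x_jy_jz_j$ that substitutes some of $u_1, v_1, u_i, v_i$ may leave a residual graph with strictly more edges than $G'$, by the maximality of $G'$. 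The new feature relative to Lemma~\ref{lemma2} is that $\{u_i, v_i\}$ already forms an edge in $G'$, so a single edge from $\{u_i, v_i\}$ into $\{x_j, y_j, z_j\}$ already produces a $P_3$; this is what forces the sharper bounds.

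For part~(1), I will suppose for contradiction that, for some $i\ge 2$ and some $j\le k-1$, the number of edges between $\{u_1,v_1,u_i,v_i\}$ and $\{x_j,y_j,z_j\}$ is at least $5$, and split into cases by $e_1\in\{0,1,2,3,4\}$, the number of edges between $\{u_1,v_1\}$ and $\{x_j,y_j,z_j\}$. Since $(u_1,v_1)$ realises the maximum, the number $e_2$ of edges between $\{u_i,v_i\}$ and $\{x_j,y_j,z_j\}$ satisfies $e_2\le e_1$ and $e_1+e_2\ge 5$. In each configuration I will exhibit either two vertex-disjoint $P_3$'s inside $G[\{x_j,y_j,z_j,u_1,v_1,u_i,v_i\}]$ (violating~(i)) or a swap that puts $u_1v_1$ or $u_iv_i$ in place of $x_jy_jz_j$ while leaving two disjoint edges in the residual (violating~(ii)). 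The symmetry $u_1v_1\leftrightarrow u_iv_i$ and the special role of the middle vertex $y_j$ (two of its neighbours automatically give a $P_3$ centred at $y_j$) should collapse many subcases together.

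For part~(2), the hypothesis $e_1\ge 5$ forces each of $u_1,v_1$ to have at least two neighbours in $\{x_1,y_1,z_1\}$. Hence, if any $\zeta\in\{u_2,v_2,\ldots,u_s,v_s,w_1,\ldots,w_t\}$ has an edge to some $\alpha\in\{x_1,y_1,z_1\}$, a direct routing inside $G[\{x_1,y_1,z_1,u_1,v_1,\zeta\}]$ — using $\zeta\alpha\beta$ with $\beta\in\{u_1,v_1\}$ adjacent to $\alpha$, together with a second $P_3$ centred at the other of $u_1,v_1$ on the remaining three vertices — produces $2\cdot P_3$, hence $k\cdot P_3$; this proves the no-edge assertion. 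For the $6$-edge bound at a general $j$, a weight-counting argument of the same flavour shows that whenever both pairs $\{u_1,v_1\}$ and $\{u_i,v_i\}$ contribute at least one edge, one can always select $\alpha$ hit by $\{u_1,v_1\}$ and $\beta\ne\alpha$ hit by $\{u_i,v_i\}$ as soon as the total exceeds $6$, yielding $2\cdot P_3$; refining this at equality $6$ forces all six edges onto a single pair, giving the stated characterisation.

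The principal obstacle I foresee is the sheer volume of subcases in part~(1): five values of $e_1$, each pairing with several compatible $e_2$-configurations, each further split by which of $x_j,y_j,z_j$ the edges hit. Containment of this combinatorics will rely on the $u_1v_1\leftrightarrow u_iv_i$ symmetry, the middle-vertex reduction noted above, and re-use of the routing moves already established in the proofs of Lemmas~\ref{lemma1} and~\ref{lemma2}.
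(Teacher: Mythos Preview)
Your outline would work, but it is far heavier than what is actually needed, and it rests on a tool that the lemma does not require. The paper's proof of part~(1) avoids the $(e_1,e_2)$ case split entirely by means of a single observation: if \emph{any one} vertex of $\{u_1,v_1,u_i,v_i\}$, say from the pair $\{u_1,v_1\}$, hits two vertices of $\{x_j,y_j,z_j\}$, then an edge from $\{u_i,v_i\}$ to any vertex of $\{x_j,y_j,z_j\}$ immediately yields two disjoint $P_3$'s (extend each edge by its $G'$-partner); hence the \emph{other} pair contributes nothing. Combined with the maximality hypothesis (which bounds each pair's contribution by~$4$), this gives at most $4+0$ edges; and if no vertex has two neighbours in the path, the total is trivially at most~$4$. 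That is the whole argument---no enumeration of $(e_1,e_2)$ values, and no use of your tool~(ii), the maximality of $e(G')$. The same observation also dispatches part~(2): total $\ge 7$ forces some vertex to have degree $\ge 2$, so the opposite pair contributes~$0$ and the total is $\le 6$, a contradiction; at equality $6$ the same reasoning forces all six edges onto one pair.

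Two small corrections to your write-up. First, your justification ``since $(u_1,v_1)$ realises the maximum, $e_2\le e_1$'' is not valid for general $j$: the maximum is attained at the specific index pair $(1,1)$, so for $j\neq 1$ it does not compare $e_1$ and $e_2$ directly. The inequality $e_2\le e_1$ may be assumed without loss of generality only by the symmetry $u_1v_1\leftrightarrow u_iv_i$ that you mention later. Second, in part~(2) you claim that distinct targets $\alpha\ne\beta$ can be chosen ``as soon as the total exceeds~$6$''; in fact such $\alpha,\beta$ already exist whenever both pairs contribute and the total exceeds~$4$ (the only obstruction is both pairs hitting a single common vertex), and you need this stronger version to get the equality characterisation without further casework.
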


\begin{proof}
 (1).
  If there is at least one vertex, say $u_1$, in $\{u_{1},v_{1}, u_i, v_i\}$ such that $u_1$ hit at least two vertices, say $x_j$ and $y_j$ (or $x_j$ and $z_j$),  in $\{x_j, y_j, z_j\}$, then  there is no edge between $\{ u_i, v_i\}$ and $\{x_{j}, y_{j}, z_{j}\},$  since $G[u_{1},v_{1}, u_i, v_i, x_{j}, y_{j}, z_{j}]$ does not contain $2\cdot P_3$.  Hence there are at most 4 edges  between $\{u_{1},v_{1}, u_i, v_i\}$ and $\{x_{j}, y_{j}, z_{j}\}$.
 If each vertex in $\{u_{1},v_{1}, u_i, v_i\}$  hits at most one vertex in  $\{x_j, y_j, z_j\}$, then  there are at most 4 edges  between $\{u_{1},v_{1}, u_i, v_i\}$ and $\{x_{j}, y_{j}, z_{j}\}$.  So (1) holds.

    (2). If there are at least $5$ edges between $\{u_1, v_1\}$ and $\{x_1, y_1, z_1\}$, then there is no edge between $\{x_1, y_1, z_1\}$ and $\{u_i,v_i,\ldots,u_s,v_s,w_1, \ldots, w_t\}$,  since $G[x_1, y_1, z_1, u_1, v_1, w_p]$ and $G[x_1, y_1, z_1, u_1, v_1, u_i,v_i]$ does not contain $2\cdot P_3$ for $i=2,\ldots,s;p=1, \ldots, t$.
    In addition, it's easy to see that there are at most 6 edges between $\{u_1, v_1, u_i, v_i\}$ and $\{x_j, y_j, z_j\}$.

     If there are 6 edges between $\{u_1, v_1, u_i, v_i\}$ and $\{x_j, y_j, z_j\}$, it is easy to see that either there are 6 edges between $\{u_1, v_1\}$ and $\{x_j, y_j, z_j\}$
and there is no edge between  $\{u_i, v_i\}$  and $\{x_j, y_j, z_j\}$, or  there are 6 edges between $\{u_i, v_i\}$ and $\{x_j, y_j, z_j\}$
and there is no edge between  $\{u_1, v_1\}$  and $\{x_j, y_j, z_j\}$, because  $G[x_j, y_j, z_j, u_1, v_1, u_i,v_i]$ does not contain $2\cdot P_3$. So the assertion holds.
\end{proof}

\section{Proof of Theorem~\ref{main}.}

It is noticed that if $n<3k$, the assertion clearly holds. In addition, if $n=5k-1$, it is easy to see that
$${k-1 \choose 2} +(n-k+1)(k-1)+\lfloor\frac{n-k+1}{2}\rfloor={ 3k-1 \choose 2}+\lfloor\frac{n-3k+1}{2}\rfloor={3k-1 \choose 2}+k.$$

  In order to prove Theorem~\ref{main}, it is sufficient to prove the following statement: if a graph $G$ of order $n$ with ${n \choose 2}$,  ${ 3k-1 \choose 2}+\lfloor\frac{n-3k+1}{2}\rfloor$, ${3k-1 \choose 2}+k$ and  ${k-1 \choose 2} +(n-k+1)(k-1)+\lfloor\frac{n-k+1}{2}\rfloor$  edges for
  $ n<3k$,  $3k\leq n<5k-1$, $n=5k-1$ and  $n>5k-1$ respectively does not contain $k\cdot P_3$, then $G=K_n$,  $K_{3k-1}\bigcup M_{n-3k+1}$,  $ K_{3k-1}\bigcup M_{2k}$  or $K_{k-1}+M_{n-k+1},$  $K_{k-1}+M_{n-k+1}$ for  $ n<3k$,  $3k\leq n<5k-1$, $n=5k-1$ and $n>5k-1$, respectively.

  We will prove the above statement  by double induction on $k$ and $n$. For $k=1$, it is easy to see that the assertion for $n\le 3$.  Moreover a graph of order $4$ which does not contain $P_3$ has to be  $K_2\bigcup M_2=M_4$.
  If a graph of order $n>4$ with $\lfloor\frac{n}{2}\rfloor$ edges does not contain $P_3$, then each component of $G$ is an edge or an isolated vertex. So $G=M_n$ and the assertion  holds for $k=1$ and all $n$. Suppose $k\geq2$ and the assertion holds for less than $k$ and all integers $n$.  Now we will prove the assertion holds for $k$ and all integers $n$.  Clearly,  a graph of order $n<3k$ with ${n \choose 2}$ edges  which does not contain $k\cdot P_3$ has to be $K_n$, i.e., $ex(n, k\cdot P_3)={n \choose 2}$ and $G_{ex}(n, k\cdot P_3)=K_n$ for $n<3k$.  The rest of the proof will be divided into the following three parts.
\subsection{$n=3k.$}
  Let $G$ be a graph of  order $n=3k$ with ${ 3k-1 \choose 2}$ edges which does not contain $k\cdot P_3$.
    Then it is easy to see that $G=K_5\bigcup M_1$  for $k=2$ and $n=3\times 2$.
    So we assume that $k\ge 3$. Since   $e(G)={3k-1 \choose 2}>{3(k-1)-1 \choose 2}+2$ and $n=3k\leq 5(k-1)-1$, $G$ contains $(k-1)\cdot P_3$ as a subgraph
     by  the induction hypothesis for $k-1$ and $n$.  Then there exists an $H=(k-1)\cdot P_3$ such that
     $e(G-V(H))$ is the maximum value, where
      $G-V(H)$ is the  subgraph of  $G$ by deleting all vertices in $H$ and incident edges and $H$ is a $(k-1)$ disjoint $P_3$: $x_1y_1z_1,\ldots,x_{k-1}y_{k-1}z_{k-1}$.  Then $G^{\prime}=G-V(H)$ has three vertices and let $V(G^{\prime})=\{u,v,w\}$.
 Then $e(G^{\prime})\le 1$. Otherwise, $G^{\prime}$ contains a $P_3$  which implies that $G$ contains $k\cdot P_3$, a contradiction.   We consider the following two cases.

{\bf Case 1:} $e(G^{\prime})=1$. Without loss of generality, $u$ hits $v$. Then $w$ misses $u$ and $v$.  By the proof of Lemma~\ref{lemma2}, it is easy to see that
    there are at most 6 edges between $\{u,v,w\}$ and $\{x_j, y_j, z_j\}$ with equality if and only if there is no edge between $w$ and $\{x_j, y_j, z_j\}$, for $1\le j\le k-1$. Hence
      there are at most $6(k-1)$ edges between $\{u,v,w\}$ and $\{x_1, y_1, z_1,$ $\ldots, $ $ x_{k-1}, y_{k-1}, z_{k-1}\}$ with equality if and only if $w$ is an isolated vertex and  there are
 exactly $6(k-1)$ edges  between $\{u,v\}$ and $\{x_1, y_1, z_1,$ $ \ldots,$ $ x_{k-1}, y_{k-1}, z_{k-1}\}$. Therefore
 $${3k-1 \choose 2}=e(G)\le e(G-\{u,v,w\})+6(k-1)+1\le
 {3k-3\choose 2}+6(k-1)+1={3k-1 \choose 2}.$$ So $e(G-\{u,v,w\})={3k-3\choose 2}$ and  there are exactly $6(k-1)$ edges between $\{u,v,w\}$ and $\{x_1, y_1, z_1,$ $\ldots, $ $ x_{k-1}, y_{k-1}, z_{k-1}\}$. Then $w$ is an isolated vertex and  there are
 exactly $6(k-1)$ edges  between $\{u,v\}$ and $\{x_1, y_1, z_1,$ $ \ldots,$ $ x_{k-1}, y_{k-1}, z_{k-1}\}$. Hence $G=K_{3k-1}\bigcup M_{1}$.

{\bf Case 2:} $e(G^{\prime})=0$.  By Lemma~\ref{lemma1}, there are at most 3 edges between $\{u, v, w\}$ and $\{x_{j}, y_{j}, z_{j}\}$ for $1\le j\le k-1$.
Hence there are at most $3(k-1)$ edges $\{u,v, w\}$ and $\{x_1, y_1, z_1,$ $ \ldots,$ $ x_{k-1}, y_{k-1}, z_{k-1}\}$. Then
   $${3k-1 \choose 2}=e(G)\le {3k-3 \choose 2}+3(k-1)<{3k-1 \choose 2}.$$ It is a contradiction. Therefore the assertion holds for $n=3k$. $\Box$

\subsection{$3k<n\leq 5k-1$}

By $e(G)\geq{3k-1 \choose 2}+\lfloor\frac{n-3k+1}{2}\rfloor$ and a simple calculation, it is easy to see
$$ e(G)> \left\{\begin{array}{lll}
{3k-4 \choose 2}+\lfloor\frac{n-3k+4}{2}\rfloor,  &\mbox{for} & 3k< n\leq5(k-1)-1;\\
{k-2 \choose 2}+(n-k+2)(k-2)+\lfloor\frac{n-k+2}{2}\rfloor,
& \mbox{for} & 5(k-1)-1<n\leq 5k-1.\end{array}\right.$$
By the induction hypothesis for $k-1$ and $n$, $G$ contains a $(k-1)\cdot P_{3}$.
 Then there exists an $H=(k-1)\cdot P_3$ such that
     $G^{\prime}=G-V(H)$ has the maximum number of edges, where
     $G-V(H)$ is the subgraph of $G$ by deleting all vertices in $H$ and incident edges and $H$ is a $(k-1)$  disjoint $P_3$: $x_1y_1z_1,\ldots,x_{k-1}y_{k-1}z_{k-1}.$
     Since $G$ does not contain $k\cdot P_3$,  each connected component of $G^{\prime}$
     is an edge or an isolated vertex.
      So $G^{\prime}$ consists of $s$ edges $u_{1}v_{1},u_{2}v_{2},\ldots,u_{s}v_{s}$ and $ t$ isolated vertices $w_{1},w_{2},\ldots,w_{t}$, where $0\le s\le \lfloor\frac{n-3k+3}{2}\rfloor$ and  $ t= n-3k+3-2s$.

 Further, it is not difficulty  to see that   $ s\ge 1$. In fact, if $s=0$, then for any subgraph $H_1$ of $G$ contains $(k-1)\cdot P_3$, $G-V(H_1)$ has no edge. Moreover,  $t=n-3k+3\ge 4$. Since there are at most ${3k-3\choose 2} $ edges in $\{x_1, y_1, z_1, \ldots, x_{k-1}, y_{k-1}, z_{k-1}\}$, there are at least ${3k-1\choose 2}+\lfloor\frac{n-3k+1}{2}\rfloor-{3k-3\choose 2}>6k-5 $  edges between $\{x_1, y_1, z_1, \ldots, x_{k-1}, y_{k-1}, z_{k-1}\}$ and $\{w_1, \ldots, w_t\}$. Hence there exists a $1\le j\le k-1$  such that there are at least 7 edges between $\{x_j, y_j, z_j\}$ and $\{w_1, \ldots, w_t\}$. Without loss of generality, let $w_1$ and $w_2$ hit at least one vertex in $\{x_1, y_1, z_1\}$, respectively. Then by Lemma\ref{lemma1}, $w_1$ and $w_2$ hit $y_1$ and miss $x_1,z_1$. Further, by Lemma~\ref{lemma1},  there are at most 4 edges between $\{x_j, y_j, z_j\}$ and
 $\{w_1, w_2, w_3, w_4\}$ for $j=1, \ldots, k-1$. Let $G_1=G-\{y_1, w_1, w_2, w_3, w_4\}$.  Then
 $$e(G_1)\ge  {3k-1 \choose 2}+\lfloor\frac{n-3k+1}{2}\rfloor-[(n-1)+4(k-2)]
 >{3k-4\choose 2} +    \lfloor\frac{n-3k-1}{2}\rfloor.$$
 On the other hand,  since  $G_1$ is a simple graph of order $n-5$  with $e(G_1)>{3k-4\choose 2}$, we have
 $n-5>3k-4$.  Hence by the induction hypothesis for $k-1$ and $n-5\ge 3(k-1)$, $ G_1$ contains $(k-1)\cdot P_3$. Moreover, the subgraph $G[y_1, w_1, w_2, w_3, w_4]$ contains one $P_3$. So $G$ contains $k\cdot P_3$, which is a contradiction.

      We now consider the following two cases.

{\bf Case 1.}  There exists an edge, says $u_{1}v_{1}$, in $G^{\prime}$ such that there is no edge between $\{u_{1}, v_{1}\}$ and $\{x_1, y_1,z_1, \ldots, x_{k-1}, y_{k-1}, z_{k-1}\}$.

  If $n=3k+1$, let $G_2=G-\{u_{1}\}$ is a subgraph of $G$ with  $3k$ vertices and $e(G_2)\geq{3k-1 \choose 2}$ edges.  Moreover,  $G_2$ does not contain $k\cdot P_3$.
  By the induction hypothesis  on $k$ and $n-1=3k$, we have $G_2=K_{3k-1}\bigcup M_1$ and $v_1$ is an isolated vertex in $G_2$. So $G= K_{3k-1}\bigcup M_{2}$.

  If $n>3k+1$, let $G_3=G-\{u_{1},v_{1}\}$ is a subgraph of $G$ with $n-2\ge 3k$ vertices and  $e(G_3)\geq {3k-1 \choose 2}+\lfloor\frac{n-3k-1}{2}\rfloor$ edges. Moreover, $G_3$ does not contain $k\cdot P_3$.  By the induction hypothesis for $k$ and $n-2$,  we have $e(G_3)={3k-1 \choose 2}+\lfloor\frac{n-3k-1}{2}\rfloor $  and $G_3=K_{3k-1}\bigcup M_{n-3k-1}$.  So $G=K_{3k-1}\bigcup M_{n-3k+1}.$
   Therefore the assertion holds.

%{\bf Case 2.} There exists an isolated vertex, says $w_{1}$, in $G^{\prime}$ such that there is no edge between $w_{1}$ and $\{x_1, y_1,z_1, \ldots, x_{k-1}, y_{k-1}, z_{k-1}\}$.
%Let $G^{\prime\prime}=G-\{w_{1}\}$ be a subgraph of $G$ which does not contain $k\cdot P_3$.
% with $n-1$ vertices and
%$e(G^{\prime\prime})\geq {3k-1 \choose 2}+\lfloor\frac{n-3k}{2}\rfloor$ edges.
%Then  $n-3k+1$ is odd. Otherwise  $G^{\prime\prime}$ is a graph of order $n-1$ with
%$e(G^{\prime\prime})> {3k-1 \choose 2}+\lfloor\frac{n-3k}{2}\rfloor$ edges. By the induction hypothesis for $k$ and $n-1$, $G^{\prime\prime}$ contains $k\cdot P_3$, which is a contradiction. Hence $e(G^{\prime\prime})={3k-1 \choose 2}+\lfloor\frac{n-3k}{2}\rfloor$. By the induction hypothesis for $k$ and $n-1$,  we have $G^{\prime\prime}=K_{3k-1}\bigcup M_{n-3k}$. So $G=K_{3k-1}\bigcup M_{n-3k+1}$.
%The assertion holds.

{\bf Case 2:}  There is at least one edge between $\{u_{i},v_{i}\}$ and $\{x_1,y_1,z_1, \ldots, x_{k-1}, y_{k-1},$ $ z_{k-1}\}$ for $i=1, \ldots, s$.  Without loss of generality,  we assume that the number of edges between $\{u_1, v_1\}$ and $\{x_1, y_1, z_1\}$ is the maximum value among the numbers of edge between $\{u_i, v_i\}$ and $\{x_j, y_j, z_j\}$  for  $i=1, \ldots, s$ and $j=1, \ldots k-1$.
  Moreover, assume that  $u_1$ hits  a vertex $\alpha $ in $\{x_1, y_1, z_1\}$.

   Further $s\ge 2$. In fact, if $s=1$, then $t=n-3k+3-2s\ge 3k+1-3k+3-2 =2$.
  If there are at most 4 edges between $\{x_1, y_1, z_1\}$ and $\{u_1, v_1\}$,
    then by Lemma~\ref{lemma2}, there are at most  4 edges between
$\{u_1, v_1, w_1, w_2\}$ and $\{x_j, y_j, z_j\},$ for $j=1,2,\ldots,k-1$.
 Let  $G_4=G-\{u_{1},v_{1},\alpha,w_{1},w_{2}\}$.  Then by $n\le 5k-1,$ we have   $$e(G_4)\geq{3k-1 \choose 2}+\lfloor\frac{n-3k+1}{2}\rfloor- 4(k-2)-(n-1)-1>{3k-4\choose 2}+\lfloor\frac{n-3k-1}{2}\rfloor.$$
  On the other hand,  since  $G_4$ is a simple graph of $n-5$  with $e(G_4)>{3k-4\choose 2}$, we have
 $n-5>3k-4$. By the induction hypothesis for $k-1$ and $n-5\ge 3(k-1)$, $G_4$ contains $(k-1)\cdot P_{3}$, which implies that $G$ contains $k\cdot P_{3}$, since $\alpha u_{1}v_{1}$ is a $P_{3}$ in $G$. It is a contradiction.

If there are at least 5 edges between $\{x_1, y_1, z_1\}$ and $\{u_1, v_1\}$, then by Lemma~\ref{lemma2}, there is no edge between $\{x_1, y_1, z_1\}$ and $\{w_1, \ldots, w_t\}$ and  there are at most 6 edges between  $\{x_j, y_j, z_j\}$ and $\{u_1, v_1, w_1, w_2\}$ for $1\le j\le k-1$.
 Let  $G_5=G-\{u_{1},v_{1},\alpha,w_{1},w_{2}\}$.  Then   $$e(G_5)\geq{3k-1 \choose 2}+\lfloor\frac{n-3k+1}{2}\rfloor- 6(k-1)-(3k-4)-1>{3k-4\choose 2}+\lfloor\frac{n-3k-1}{2}\rfloor$$. On the other hand,  since  $G_5$ is a simple graph of $n-5$  with $e(G_5)>{3k-4\choose 2}$, we have
 $n-5>3k-4$. By the induction hypothesis for $k-1$ and $n-5\ge 3(k-1)$, $G_5$ contains $(k-1)\cdot P_{3}$, which implies that $G$ contains $(k-1)\cdot P_{3}$, since   $\alpha u_{1}v_{1}$  is a  $P_{3}$ in $G$. It is  a contradiction.

 By $s\ge 2$, let  $G_6=G-\{u_{1},v_{1},\alpha, u_{2},v_{2}\}$.
   we consider the following two subcases.

   {\bf Subcase 2.1:}
   There are  at most 4 edges between $\{u_1, v_1\}$ and $\{x_1, y_1, z_1\}$.
   Then   $$e(G_6)\geq{3k-1 \choose 2}+\lfloor\frac{n-3k+1}{2}\rfloor-
 [(n-5)+4(k-1)+2]\ge {3k-4\choose 2}+\lfloor\frac{n-3k-1}{2}\rfloor$$ by $n\le 5k-1$.
 On the other hand,  since $G_6$ does not contain $(k-1)\cdot P_{3}$, we have $e(G_6)\le {3k-4\choose 2}+\lfloor\frac{n-3k-1}{2}\rfloor$. Hence $n=5k-1$ and
$e(G_6)= {3k-4\choose 2}+\lfloor\frac{n-3k-1}{2}\rfloor$ with $d_G(\alpha)=n-1$. By the induction hypothesis for $k-1$ and $n-5$, $G_6=K_{3k-4}\bigcup M_{n-3k-1}$  or $G_6=K_{k-2}+M_{n-k-3}$.
If $G_6=K_{3k-4}\bigcup M_{n-3k-1}$, then  $k=2$, otherwise, it is easy to see that  $G$ contains $k\cdot P_3$,  which is a contradiction. So $G=K_1+M_8
$. If $G_6=K_{k-2}+M_{n-k-3}$, then $G=K_{k-1}+M_{n-k+1}$.
Hence  $G=K_{k-1}+M_{n-k+1}$ and $n=5k-1$. So the assertion holds for $k$ and
$n\le 5k-1$.

{\bf Subcase 2.2:} There are at least 5 edges between $\{u_1, v_1\}$ and $\{x_1, y_1, z_1\}$. Then
$$e(G_6)\geq{3k-1 \choose 2}+\lfloor\frac{n-3k+1}{2}\rfloor
 -[6(k-1)+(3k-4)+2]= {3k-4\choose 2}+\lfloor\frac{n-3k-1}{2}\rfloor.$$
   Since $G_6$ does not contain $(k-1)\cdot P_{3}$,
 we have $e(G_6)\le {3k-4\choose 2}+\lfloor\frac{n-3k-1}{2}\rfloor$. Hence $e(G_6)= {3k-4\choose 2}+\lfloor\frac{n-3k-1}{2}\rfloor$ and
  there are 6 edges between $\{u_1, v_1, u_2, v_2\}$ and $\{x_j, y_j, z_j\}$ for $j=1, \ldots, k-1$.  By the condition of Case 2,  there exists a $2\le l\le k-1$ such that
  there is at least one edge between $\{u_2, v_2\}$ and $\{x_l, y_l, z_l\}$.
  Further by Lemma~\ref{lemma3}, there are 6 edges between $\{u_{2},v_{2}\}$ and $\{x_{l},y_{l},z_{l}\}$.
  On the other hand, by the induction hypothesis for $k-1$ and $n-5$, $G_6=K_{3k-4}\bigcup M_{n-3k-1}$  or $G_6=K_{k-2}+M_{n-k-3}$ with $n=5k-1$. Since $x_l y_l z_l$ is one of the $k-2$ disjoint $P_3$ in $G_6$ and both of $u_2,v_2$ hit all vertices of $x_l y_l z_l$, it is easy to see that
  $G$ contains $k\cdot P_3$, which is a contradiction.
 Therefore the assertion holds for $k$ and $n\le 5k-1$.

\subsection{$n>5k-1$}

Since $n>5(k-1)-1$ and $e(G)\geq {k-1 \choose 2} +(n-k+1)(k-1)+\lfloor\frac{n-k+1}{2}\rfloor>    {k-2 \choose 2} +(n-k+2)(k-2)+\lfloor\frac{n-k+2}{2}\rfloor$
 $G$ contains $(k-1)\cdot P_{3}$ by the induction hypothesis for $k-1$ and $n$.
     Then there exists an $H=(k-1)\cdot P_3$ such that
     $G^{\prime}=G-V(H)$ has the maximum number of edges, where
     $G-V(H)$ is the subgraph of $G$ by deleting all vertices in $H$ and incident edges and $H$ is a $(k-1)$  disjoint $P_3$: $x_1y_1z_1,\ldots,x_{k-1}y_{k-1}z_{k-1}.$
          Moreover each connected component of $G^{\prime}$
     is an edge or an isolated vertex,  Since $G$ does not contain $k\cdot P_{3}$.
           So we assume that $G^{\prime}$ consists of $s$ edges $u_{1}v_{1},u_{2}v_{2},\ldots,u_{s}v_{s}$ and $ t$ isolated vertices $w_{1},w_{2},\ldots,w_{t}$, where $0\le s\le \lfloor\frac{n-3k+3}{2}\rfloor$ and  $ t= n-3k+3-2s$.

           It is not difficult to see that  $ s\ge 1$. Suppose that $s=0 $.
                       Since there are at most ${3k-3\choose 2} $ edges in $\{x_1, y_1, z_1, \ldots, x_{k-1}, y_{k-1}, z_{k-1}\}$, there are at least ${k-1 \choose 2} +(n-k+1)(k-1)+\lfloor\frac{n-k+1}{2}\rfloor-{3k-3\choose 2}>6k-5$  edges between $\{x_1, y_1, z_1, \ldots, x_{k-1}, y_{k-1}, z_{k-1}\}$ and $\{w_1, \ldots, w_t\}$. Hence there exists a $1\le j\le k-1$  such that there are at least 7 edges between $\{x_j, y_j, z_j\}$ and $\{w_1, \ldots, w_t\}$. Without loss of generality, let $w_1$ and $w_2$ hit at least one vertex in $\{x_1, y_1, z_1\}$, respectively. Then $w_1$ and $w_2$ hit $y_1$, $\{w_1,w_2,w_3,w_4\}$ misses $\{x_1,z_1\}$ by the maximum value $e(G^{\prime})$. By Lemma~\ref{lemma1},  there are at most 4 edges between $\{w_1,w_2,w_3,w_4\}$ and $\{x_j, y_j, z_j\}$ for $j=1,2,\ldots,k-1$. Let $G_7=G-\{y_1, w_1, w_2, w_3, w_4\}$. %% On the other hand, for any subgraph $H_1$ with $3k-3$ vertices of $G$ contains $(k-1)\cdot P_3$, $G-V(H_1)$ has no edges. Since $e(G)\geq {k-1 \choose 2}+(n-k+1)(k-1)+\lfloor\frac{n-k+1}{2}\rfloor-[(n-1)+4(k-2)]> {3k-3 \choose 2}$,  there must be an edge between $V(G^{\prime})$ and $V(G)\setminus V(G^{\prime})$, say $w_1$ hits $\alpha$, where $\alpha$ is a vertex in $\{x_1,y_1,z_1\}$. Moreover,  $t=n-(3k-3)-2\ge 4$. By Lemma~\ref{lemma1},
         %% there are at most 4 edges between $\{x_j, y_j, z_j\}$ and
 %%$\{w_1, w_2, w_3, w_4\}$ for $j=2, \ldots, k-1$. Let $G^{\prime\prime}=G-\{\alpha, w_1, w_2, w_3, w_4\}$.  Then
 \begin{eqnarray*}
 e(G_7)&=&  {k-1 \choose 2}+(n-k+1)(k-1)+\lfloor\frac{n-k+1}{2}\rfloor-[(n-1)+4(k-2)]\\
& >&{k-2\choose 2} +(n-k-3)(k-2)+    \lfloor\frac{n-k-3}{2}\rfloor.\end{eqnarray*}
   Hence by the induction hypothesis for $k-1$ and $n-5\ge 3(k-1)$, $ G_7$ contains $(k-1)\cdot P_3$. Hence $G$ contains $k\cdot P_3$, which is a contradiction. So $s\ge 1$.

  Further, we can show that there is at least one edge between $\{u_i, v_i\}$ and $\{x_1, y_1, z_1,$ $ \ldots, x_{k-1}, y_{k-1}, z_{k-1}\}$ for $i=1, \cdots, s$.
In fact, suppose that there is no edge between $\{u_i, v_i\}$ and $\{x_1, y_1, z_1, \ldots, x_{k-1}, y_{k-1}, z_{k-1}\}$ for some $1\le i\le s$.
     If $n\geq 5k+1$, let $G_8=G-\{u_{i},v_{i}\}$. Then \begin{eqnarray*}
     e(G_8)&\geq & {k-1 \choose 2} +(n-k+1)(k-1)+\lfloor\frac{n-k+1}{2}\rfloor-1\\
     &>&{k-1 \choose 2} +(n-k-1)(k-1)+\lfloor\frac{n-k-1}{2}\rfloor.
     \end{eqnarray*}
      By the induction hypothesis for $k$ and $n-2$, $G_8$ contains $k\cdot P_{3}$, which is a contradiction.   If $n=5k$ and $k\geq3$, let $G_9=G-\{u_{i}\}$. Then
      $$e(G_9)\geq {k-1 \choose 2} +(n-k+1)(k-1)+\lfloor\frac{n-k+1}{2}\rfloor-1>{k-1 \choose 2} +(n-k)(k-1)+\lfloor\frac{n-k}{2}\rfloor.$$ By the induction hypothesis for $k$ and $n-1$, $G_9$ contains $k\cdot P_{3}$. If $n=10, k=2$ and $G_9=G-\{u_{i}\}$,  then $G_9$ does not contain $2\cdot P_3$.  By  the induction hypothesis for $k=2$ and $n=9$, $G=K_1+M_8$ or $K_5\bigcup M_4$, which implies $G$ contains $2\cdot P_3$. It is a contradiction.

 Without loss of generality, the number of edges between $\{u_1, v_1\}$ and $\{x_1, y_1, z_1\}$  is the maximum value among the number of edge between $\{u_i, v_i\}$ and $\{x_j, y_j, z_j\}$ for $i=1, \cdots, s$ and $ j=1, \ldots, k-1$.

 It is not difficult to see that $s\ge 2$. In fact, Suppose that $s=1$.  Since there is at least one edge between $\{u_1,v_1\}$ and $\{x_1, y_1, z_1\}$, we assume that $u_1$ hits one vertex $\alpha$ in $\{x_1, y_1, z_1\}$.
 If there are at most 4 edges between $\{x_1, y_1, z_1\}$ and $\{u_1, v_1\}$,  then  by Lemma~\ref{lemma2}, there are at most  4 edges between
$\{u_1, v_1, w_1, w_2\}$ and $\{x_j, y_j, z_j\}.$  Let  $G_{10}=G-\{u_{1},v_{1},\alpha,w_{1},w_{2}\}$.  Then
 \begin{eqnarray*}
e(G_{10})&\geq & {k-1 \choose 2}+(n-k+1)(k-1)+\lfloor\frac{n-k+1}{2}\rfloor- 4(k-2)-(n-1)-1\\
&>& {k-2\choose 2}+(n-k-3)(k-2)+\lfloor\frac{n-k-3}{2}\rfloor.
\end{eqnarray*}
  By the induction hypothesis for $k-1$ and $n-5\ge 5(k-1)-1$, $G_{10}$ contains $(k-1)\cdot P_{3}$, which implies that $G$ contains $k\cdot P_{3}$, since   $\alpha u_{1}v_{1}$   is a  $P_{3}$ in $G$. It is  a contradiction.

If there are at least 5 edges between $\{u_1, v_1\}$ and $\{x_1, y_1, z_1\}$, then by Lemma~\ref{lemma2},  there are at most 6 edges  between $\{u_{1},v_{1}, w_1, w_2\}$ and $\{x_{j}, y_{j}, z_{j}\}$ for $j=1, \ldots, k-1$ and  there is no edge between $\{w_1, \ldots, w_t\}$ and $\{x_1, y_1, z_1\}$.
 Let  $G_{11}=G-\{u_{1},v_{1},\alpha,w_{1},w_{2}\}$. Then
  \begin{eqnarray*}
e(G_{11})&\geq & {k-1 \choose 2}+(n-k+1)(k-1)+\lfloor\frac{n-k+1}{2}\rfloor- 6(k-1)-(3k-4)-1\\
&>& {k-2\choose 2}+(n-k-3)(k-2)+\lfloor\frac{n-k-3}{2}\rfloor.
\end{eqnarray*}
 By the induction hypothesis for $k-1$ and $n-5\ge 3(k-1)$, $G_{11}$ contains $(k-1)\cdot P_{3}$, which implies that $G$ contains $k\cdot P_{3}$, since   $\alpha u_{1}v_{1}$  is a  $P_{3}$ in $G$. It is  a contradiction.

    We consider the following two cases.

    {\bf Case 1.} There are  at most 4 edges between $\{u_1, v_1\}$ and $\{x_1, y_1, z_1\}$. Then  by Lemma~\ref{lemma3},  there are at most 4 edges  between $\{u_{1},v_{1}, u_2, v_2\}$ and $\{x_{j}, y_{j}, z_{j}\}$, for $1\le j\le k-1$.
Let  $G_{12}=G-\{u_{1},v_{1},\alpha, u_{2},v_{2}\}$, where $\alpha$ is a vertex in $\{x_1, y_1, z_1\}$ which hits $u_1$ or $v_1$.  There are at most $n-5$ edges between $\alpha$ and $V(G)-\{u_{1},v_{1},\alpha, u_{2},v_{2}\}$.
    Then
    \begin{eqnarray*}
    e(G_{12})&\geq& {k-1 \choose 2}+(n-k+1)(k-1)+\lfloor\frac{n-k+1}{2}\rfloor-
 [(n-5)+4(k-1)+2]\\
 &=&  {k-2\choose 2}+[n-5-(k-1)+1](k-2)+\lfloor\frac{n-5-3(k-1)+1}{2}\rfloor.
  \end{eqnarray*}
Note that $G_{12}$ does not contain $(k-1)\cdot P_3$. By the induction hypothesis on $k-1$ and $n-5>5(k-1)-1$, $G_{12}=K_{k-2}+M_{n-k-3}$.
    Moreover, $\alpha$ hits all vertices in $G_{12}$ and there are $4(k-1)$ edges between $G_{12}+\alpha$ and $\{u_1,v_1,u_2,v_2\}$. hence $G=K_{k-1}+M_{n-k+1}$.

    {\bf Case 2.} There are at least 5 edges between $\{u_1, v_1\}$ and $\{x_1, y_1, z_1\}$. By Lemma~\ref{lemma3}, there are at most 6 edges  between $\{u_{1},v_{1}, u_2, v_2\}$ and $\{x_{j}, y_{j}, z_{j}\}$ for $j=1, \ldots, k-1$ and  there is no edge between $\{u_3,v_3, \ldots, u_{s},v_{s}, w_1, \ldots, w_t\}$ and $\{x_1, y_1, z_1\}$.   Let  $G_{13}=G-\{u_{1},v_{1},\alpha, u_{2},v_{2}\}$, where $\alpha$ is a vertex in $\{x_1, y_1, z_1\}$ which hits $u_1$ or $v_1$.  Note that there is no edge between $\alpha$ and $\{u_3,v_3, \ldots, u_{s},v_{s},w_1, \ldots, w_t\}$, there are at most $3k-4$ edges between $\alpha$ and $\{x_1, y_1, z_1, \ldots, x_{k-1}, y_{k-1}, z_{k-1}\}\setminus\{\alpha\}$. Hence
    \begin{eqnarray*}
    e(G_{13})&\geq& {k-1 \choose 2}+(n-k+1)(k-1)+\lfloor\frac{n-k+1}{2}\rfloor-
 [6(k-1)+(3k-4)+2]\\
 &> &  {k-2\choose 2}+[n-5-(k-1)+1](k-2)+\lfloor\frac{n-5-(k-1)+1}{2}\rfloor
  \end{eqnarray*}
by $n>5k-1$. By the induction hypothesis on $k-1$ and $n-5>5(k-1)-1$, $G_{13}$ contains $(k-1)\cdot P_3$, which implies $G$ contains $k\cdot P_3$. It is a contradiction.  Hence the assertion holds for $k$ and all $n$. By the induction principle, the assertion holds for all positive integers $k$ and $n$.

\end{document}